\providecommand{\tabularnewline}{\\}
\numberwithin{equation}{section}
\numberwithin{figure}{section}
\theoremstyle{plain}
\newtheorem{thm}{\protect\theoremname}[section]
\newtheorem{lem}[thm]{\protect\lemmaname}
\theoremstyle{remark}
\newtheorem{rem}[thm]{\protect\remarkname}
\theoremstyle{plain}
\newtheorem{prop}[thm]{\protect\propositionname}
\newtheorem{cor}[thm]{\protect\corollaryname}
\providecommand{\corollaryname}{Corollary}
\providecommand{\lemmaname}{Lemma}
\providecommand{\propositionname}{Proposition}
\providecommand{\remarkname}{Remark}
\providecommand{\theoremname}{Theorem}
\begin{document}
\subjclass[2020]{Primary: 47N30; Secondary: 46E22, 47B65, 62J07, 68T05}
\title{Shorting Dynamics and Structured Kernel Regularization}
\begin{abstract}
This paper develops a nonlinear operator dynamic that progressively
removes the influence of a prescribed feature subspace while retaining
maximal structure elsewhere. The induced sequence of positive operators
is monotone, admits an exact residual decomposition, and converges
to the classical shorted operator. Transporting this dynamic to reproducing
kernel Hilbert spaces yields a corresponding family of kernels that
converges to the largest kernel dominated by the original one and
annihilating the given subspace. In the finite-sample setting, the
associated Gram operators inherit a structured residual decomposition
that leads to a canonical form of kernel ridge regression and a principled
way to enforce nuisance invariance. This gives a unified operator-analytic
approach to invariant kernel construction and structured regularization
in data analysis.
\end{abstract}

\author{James Tian}
\address{Mathematical Reviews, 535 W. William St, Suite 210, Ann Arbor, MI
48103, USA}
\email{james.ftian@gmail.com}
\keywords{positive operators, rank-one decompositions, Parseval frames, cone
dynamics, reproducing kernel Hilbert spaces}

\maketitle
\tableofcontents{}

\section{Introduction}\label{sec:1}

Many tasks in data analysis require removing or suppressing a known
nuisance component while preserving the structure of the remaining
signal. Examples include the extraction of feature subspaces, the
formation of invariant representations, and the regularization of
Gram matrices in kernel methods. These problems naturally lead to
operator-theoretic formulations in Hilbert space, where positive operators,
feature maps, and reproducing kernels encode the underlying geometry
of the data. The nonlinear shorting dynamic developed in this paper
provides a new way to analyze and implement such operations through
a monotone sequence of positive operators and their induced kernels.
The classical notion of shorted operators provides the structural
foundation for this dynamic.

Shorted operators were introduced and systematically studied by Anderson
and his collaborators in connection with matrix inequalities and electrical
network theory; see e.g., \cite{MR242573,MR287970,MR356949}. Their
construction provides, for a positive operator $R\in B(H)_{+}$ and
a closed subspace $U\subset H$, the maximal positive operator dominated
by $R$ whose range lies in $U^{\perp}$. The geometric and extension-theoretic
ideas underlying shorting is closely related to earlier work of Kreĭn
on selfadjoint extensions \cite{MR24574,MR24575}. These ideas have
since been expanded in several directions, including operator ranges
\cite{MR293441,MR531986,MR866966}, Schur complements and operator
inequalities \cite{MR1465881,MR2284176,MR2345997}, and spectral versions
of shorting \cite{MR2234254,MR2306006}.

In this paper, we revisit shorting from a dynamical viewpoint. Consider
the nonlinear update 
\[
R_{n+1}=R^{1/2}_{n}\left(I-T_{n+1}\right)R^{1/2}_{n},
\]
where each $T_{n}$ is an effect with $0\leq T_{n}\leq I$, and analyze
the dynamics generated by its repeated application. Its behavior turns
out to be surprisingly structured and connects directly to both classical
shorting theory and new variational, order-theoretic, and decomposition
properties. The induced map is nonlinear and order-preserving, and
produces a nontrivial trajectory $\left\{ R_{n}\right\} \subset B\left(H\right)_{+}$.
Our first goal is to analyze this trajectory. We show that the sequence
is monotone decreasing in the Löwner order, admits an additive residual
decomposition 
\[
R_{0}-R_{N}=\sum^{N-1}_{k=0}\left(R_{k}-R_{k+1}\right),
\]
and converges strongly to a positive limit $R_{\infty}$. When the
effects are supported in a closed subspace $U$ and exhaust it, this
limit reduces to the classical shorted operator of $R_{0}$ to $U^{\perp}$.

These results extend naturally to reproducing kernel Hilbert spaces.
If $K:S\times S\to\mathbb{R}$ is a positive definite kernel with
RKHS $\mathcal{H}_{K}$, then the interaction between kernel dominance
and operator structure can be traced to Aronszajn's classical work
\cite{MR51437}, and modern treatments \cite{MR3526117,MR2450103}.
If $U\subset\mathcal{H}_{K}$ is closed, and the effects $T_{n}$
act inside $U$, then the dynamics short the kernel to the orthogonal
complement of $U$. The resulting kernels $K_{n}$ form a Löwner-monotone
sequence converging to the maximal kernel dominated by $K$ whose
RKHS vanishes on $U$. 

A further aim is to describe the effect of this flow on kernel ridge
regression. The mathematical foundations of learning in RKHS date
back to the representer theorem and regularization framework of Cucker-Smale
\cite{MR1864085} and Engl-Hanke-Neubauer \cite{MR1408680,MR2146819}.
Given a finite sample and an initial Gram matrix $G_{0}$, the iterates
\[
G_{n+1}=G^{1/2}_{n}\left(I-T_{n+1}\right)G^{1/2}_{n}
\]
yield a sequence of Gram matrices on the same input points. For each
$n$, one may form the corresponding kernel ridge predictor $f_{n}$.
The sequence $\left\{ G_{n}\right\} $ admits an additive decomposition
of residuals that induces a canonical decomposition of the predictors
$f_{n}$. These structures are reminiscent of spectral filtering and
iterative schemes, but differ fundamentally in that the update is
geometric (via shorting) rather than spectral.

There is a natural interpretation in statistical learning. Many learning
problems involve a known nuisance factor or group attribute, such
as a protected demographic feature in fairness settings \cite{MR3388391,Kamiran:2012aa},
a domain or scanner label in multi-site data, or a spuriously correlated
covariate in robust prediction. In such settings one seeks predictors
that are invariant to this factor while retaining full expressive
power elsewhere. Scalar regularization cannot impose such invariance:
it shrinks all directions uniformly and does not distinguish nuisance
from signal. In contrast, the iterated shorting map enforces exact
invariance to $U$ at the level of the feature space and converges
to the maximal kernel satisfying this constraint. The induced decomposition
of $f_{n}$ into successive residuals provides a precise accounting
of how contributions aligned with $U$ are removed.

The contribution of this work is therefore threefold. First, we give
a self-contained analysis of the residual-weighted shorting flow in
$B\left(H\right)_{+}$, including monotonicity, convergence, and variational
structure, extending classical shorting theory into a dynamical regime
that appears not to have been examined previously. Second, we provide
a parallel theory for positive definite kernels, describing how shorting
interacts with RKHS structure and kernel dominance. Third, we show
how these constructions produce canonical invariant kernels and decompositions
for kernel ridge regression in the presence of a known nuisance subspace. 

The remainder of the paper is organized as follows. \prettyref{sec:2}
introduces the kernel shorting dynamics on the canonical feature space
$\mathcal{H}$. We construct the operator sequence $\left\{ R_{n}\right\} $
generated by the shorting updates, establish basic properties such
as positivity, Löwner monotonicity, and the residual identity, and
transport these results to the induced kernel sequence $\left\{ K_{n}\right\} $,
including existence of the limit kernel $K_{\infty}$.

\prettyref{sec:3} specializes to the case where the shorting acts
on a fixed closed subspace $U\subset\mathcal{H}$. Under a natural
exhaustion condition on $U$, we identify the limit $R_{\infty}$
with the projection $P_{U^{\perp}}$, obtain the explicit form of
the limiting kernel, and prove that $K_{\infty}$ is maximal among
kernels dominated by $K_{0}$ whose feature operators annihilate $U$.

\prettyref{sec:4} moves to the finite-sample setting. We study the
induced dynamics of Gram operators $G_{n}$, derive a telescoping
decomposition in terms of positive increments, and analyze the effect
on RKHS norms and kernel ridge regression, obtaining a dynamic decomposition
of the associated predictors.

\prettyref{sec:5} considers task-driven choices of the effects $T_{n}$.
We discuss several design principles (nuisance-space shorting, covariance-based
shorting, and greedy residual shorting), formulate an energy decomposition
with respect to a task operator $B$, and interpret the resulting
learning dynamics in terms of feature elimination and invariance.

\prettyref{sec:6} includes a geometric illustration of the shorting
dynamics through a two-dimensional example.

\section{Kernel Shorting Dynamics}\label{sec:2}

We start with the basic order and residual properties of the shorting
iterates and establish the existence of their limit, before turning
to the induced behavior of the associated kernels.

Let $H$ be a Hilbert space, and let $S$ be a nonempty set. Throughout,
all inner products are linear in the second variable.

A function $K:S\times S\rightarrow B\left(H\right)$ is called a $B\left(H\right)$-valued
positive definite kernel if for every finite selection $\left(s_{1},\dots,s_{m}\right)\in S$
and $\left(h_{1},\dots,h_{m}\right)\in H$, 
\[
\sum^{m}_{i,j=1}\left\langle h_{i},K\left(s_{i},s_{j}\right)h_{j}\right\rangle \ge0.
\]
There exists a Hilbert space $\mathcal{H}$ and a map $V:S\longrightarrow B\left(H,\mathcal{H}\right)$
such that 
\[
K\left(s,t\right)=V\left(s\right)^{*}V\left(t\right),
\]
and $\mathcal{H}$ is minimal in the sense that 
\[
\mathcal{H}=\overline{span}\left\{ V\left(s\right)h:s\in S,h\in H\right\} .
\]
This factorization is unique up to unitary equivalence. 

A canonical choice is to take $\mathcal{H}$ to be the reproducing
kernel Hilbert space (RKHS) $H_{\tilde{K}}$ of the associated scalar-valued
kernel defined on the product space $\left(S\times H\right)\times\left(S\times H\right)$
by
\[
\tilde{K}\left(\left(s,a\right),\left(t,b\right)\right)=\left\langle a,K\left(s,t\right)b\right\rangle 
\]
with $s,t\in S$ and $a,b\in H$, and set 
\[
V\left(t\right)a:=\tilde{K}\left(\cdot,\left(t,a\right)\right)
\]
as kernel sections. See e.g., \cite{MR2938971,MR4250453}.

All constructions below take place inside this canonical feature space
$\mathcal{H}=H_{\tilde{K}}$. We introduce a nonlinear transformation
on $\mathcal{H}$, which will serve as the fundamental dynamic governing
the evolution of new kernels derived from $K$.

Let $R_{0}=I_{\mathcal{H}}$, and choose any sequence of effects 
\[
T_{1},T_{2},\dots\in B\left(\mathcal{H}\right),\qquad0\le T_{n}\le I.
\]
Define recursively 
\[
R_{n+1}=R^{1/2}_{n}\left(I-T_{n+1}\right)R^{1/2}_{n},\qquad n\ge0.
\]
This is the shorting of $R_{n}$ along the effect $T_{n+1}$.

It induces a corresponding sequence of $B\left(H\right)$-valued kernels:
\begin{equation}
K_{n}\left(s,t\right)=V\left(s\right)^{*}R_{n}V\left(t\right),\qquad n\ge0\label{eq:b-0}
\end{equation}
where $K_{0}=K$. 

The purpose of this section is to establish the basic structural properties
of $\left(R_{n}\right)$ and $\left(K_{n}\right)$. The first lemma
records positivity and monotonicity of the operator sequence $\left(R_{n}\right)$.
\begin{lem}[Positivity and monotonicity]
For each $n\ge0$, $R_{n}$ is a positive operator on $\mathcal{H}$,
and 
\[
0\le R_{n+1}\le R_{n}\le R_{0}=I_{\mathcal{H}}.
\]
\end{lem}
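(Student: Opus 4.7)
The plan is to prove both claims together by induction on $n$, with the base case $R_0 = I_{\mathcal{H}}$ being immediate. The inductive machinery relies on a single standard fact from operator theory: if $B \in B(\mathcal{H})$ is positive and $A \in B(\mathcal{H})$ is any bounded operator, then $A^{*}BA \geq 0$. I would apply this fact twice at each step of the induction, once to obtain positivity of $R_{n+1}$ and once to obtain the Löwner inequality.

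For the inductive step, suppose $R_n \geq 0$ and $R_n \leq I_{\mathcal{H}}$. Then the positive square root $R_n^{1/2}$ is well-defined and self-adjoint. Since $0 \leq T_{n+1} \leq I$ gives $0 \leq I - T_{n+1} \leq I$, the sandwich
\[
R_{n+1} = R_n^{1/2}(I - T_{n+1})R_n^{1/2}
\]
is positive by the cited principle with $A = R_n^{1/2}$ and $B = I - T_{n+1}$. For monotonicity I would compute the one-step residual
\[
R_n - R_{n+1} = R_n^{1/2}\bigl(I - (I - T_{n+1})\bigr)R_n^{1/2} = R_n^{1/2} T_{n+1} R_n^{1/2} \geq 0,
\]
again by the same principle, this time using $T_{n+1} \geq 0$. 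Hence $R_{n+1} \leq R_n$, and chaining with the inductive hypothesis $R_n \leq I_{\mathcal{H}}$ yields $R_{n+1} \leq I_{\mathcal{H}}$, closing the induction.

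There is no substantive obstacle; the only point requiring attention is the well-definedness of $R_n^{1/2}$, which is exactly guaranteed by the positivity half of the inductive hypothesis and justifies the recursion step by step. This same residual identity $R_n - R_{n+1} = R_n^{1/2} T_{n+1} R_n^{1/2}$ will also be the natural starting point for the telescoping decomposition referenced later in the paper, so I would state it cleanly in the course of the proof even though only its nonnegativity is needed here.
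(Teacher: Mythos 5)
Your proof is correct and follows essentially the same approach as the paper: both rest on the fact that conjugation $B\mapsto A^{*}BA$ preserves positivity (equivalently, the Löwner order), applied with $A=R_n^{1/2}$ and $B=I-T_{n+1}$ (for positivity) and with the difference $R_n-R_{n+1}=R_n^{1/2}T_{n+1}R_n^{1/2}\ge 0$ (for monotonicity). The only cosmetic difference is that you frame the whole argument as a single induction maintaining $0\le R_n\le I$, while the paper writes positivity and $R_{n+1}\le R_n$ as standalone observations and invokes induction only for the final chain $R_n\le I$; you also anticipate the residual identity, which the paper states as the next lemma.
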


\begin{proof}
Positivity is immediate since 
\[
R_{n+1}=R^{1/2}_{n}\left(I-T_{n+1}\right)R^{1/2}_{n}
\]
and both $R^{1/2}_{n}$ and $\left(I-T_{n+1}\right)$ are positive.

To verify $R_{n+1}\le R_{n}$, observe 
\[
R_{n+1}=R^{1/2}_{n}\left(I-T_{n+1}\right)R^{1/2}_{n}\le R^{1/2}_{n}IR^{1/2}_{n}=R_{n}.
\]
By induction, $R_{n}\le R_{0}=I$. 
\end{proof}
The next lemma gives a basic algebraic identity for everything that
follows. It shows that the amount removed at each step is given by
a positive operator of a special form.
\begin{lem}[Residual identity]
For every $n\ge0$, 
\begin{equation}
R_{n}-R_{n+1}=R^{1/2}_{n}T_{n+1}R^{1/2}_{n}.\label{eq:b-1}
\end{equation}
For every integer $N\ge1$, 
\begin{equation}
R_{0}-R_{N}=\sum^{N-1}_{m=0}R^{1/2}_{m}T_{m+1}R^{1/2}_{m}.\label{eq:b-2}
\end{equation}
\end{lem}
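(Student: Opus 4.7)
The plan is to verify both statements by direct algebra. For \eqref{eq:b-1}, I would start from the defining recursion
\[
R_{n+1} = R_{n}^{1/2}(I - T_{n+1}) R_{n}^{1/2},
\]
distribute the middle factor, and use that $R_{n}^{1/2} R_{n}^{1/2} = R_{n}$, where $R_{n}^{1/2}$ denotes the unique positive square root of $R_{n}\in B(\mathcal{H})_{+}$ (legitimate at every stage by the preceding positivity lemma). This produces
\[
R_{n+1} = R_{n} - R_{n}^{1/2} T_{n+1} R_{n}^{1/2},
\]
and rearranging gives \eqref{eq:b-1}. The right-hand side $R_{n}^{1/2} T_{n+1} R_{n}^{1/2}$ is automatically positive, which is consistent with the Löwner bound $R_{n+1}\le R_{n}$ already established.

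For \eqref{eq:b-2}, I would telescope. Starting from
\[
R_{0} - R_{N} = \sum_{m=0}^{N-1} (R_{m} - R_{m+1})
\]
and substituting \eqref{eq:b-1} into each summand yields the claimed decomposition in one line. The same conclusion can be repackaged as a short induction on $N$: the base case $N=1$ is exactly \eqref{eq:b-1}, and the inductive step appends the increment $R_{N} - R_{N+1} = R_{N}^{1/2} T_{N+1} R_{N}^{1/2}$ to the accumulated sum.

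There is no real obstacle here; the only technical point worth flagging is the positive–square-root identity $R_{n}^{1/2} R_{n}^{1/2} = R_{n}$, which is what allows the single product expansion to produce $R_{n}$ rather than some auxiliary polar factor. Everything else is bookkeeping, and the resulting formula \eqref{eq:b-2} is the key structural fact that later sections will exploit, since each increment $R_{m}^{1/2} T_{m+1} R_{m}^{1/2}$ records exactly the positive amount removed at step $m+1$.
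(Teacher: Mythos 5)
Your proof is correct and follows exactly the same route as the paper: expand the recursion, invoke $R_{n}^{1/2}R_{n}^{1/2}=R_{n}$ to rearrange into \eqref{eq:b-1}, and then telescope to get \eqref{eq:b-2}. The remarks about positivity of the increment and the alternative induction phrasing are accurate but add nothing beyond the paper's argument.
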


\begin{proof}
Expand the definition of $R_{n+1}$: 
\[
R_{n+1}=R^{1/2}_{n}\left(I-T_{n+1}\right)R^{1/2}_{n}=R^{1/2}_{n}R^{1/2}_{n}-R^{1/2}_{n}T_{n+1}R^{1/2}_{n}.
\]
Since $R_{n}=R^{1/2}_{n}R^{1/2}_{n}$, subtracting gives \eqref{eq:b-1}. 

Summing the residual identity over $m=0,\dots,N-1$ yields \eqref{eq:b-2}.
\end{proof}
We now identify the strong limit of the sequence $\left(R_{n}\right)$.
\begin{thm}
\label{thm:b-3}The sequence $\left(R_{n}\right)$ converges strongly
to a positive operator $R_{\infty}\in B\left(\mathcal{H}\right)$.
Moreover, 
\begin{equation}
R_{0}-R_{\infty}=\sum^{\infty}_{m=0}R^{1/2}_{m}T_{m+1}R^{1/2}_{m}\label{eq:b-3}
\end{equation}
with strong convergence of the series. 
\end{thm}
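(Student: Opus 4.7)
The plan is to reduce the convergence statement to a standard monotone convergence theorem for self-adjoint operators (Vigier's theorem), and then read off the series representation from the residual identity already proved.

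First, I would invoke the previous lemma to observe that $(R_n)$ is a Löwner-monotone decreasing sequence in $B(\mathcal{H})_+$, uniformly bounded by $I_\mathcal{H}$. Vigier's theorem (the operator analogue of the monotone convergence theorem: every bounded monotone net of self-adjoint operators converges strongly to a self-adjoint operator) then yields the existence of a bounded self-adjoint $R_\infty \in B(\mathcal{H})$ with $R_n \xrightarrow{\mathrm{SOT}} R_\infty$. Since positivity is preserved under strong limits (the set $\{A : A = A^*,\ \langle x, A x\rangle \ge 0 \text{ for all } x\}$ is SOT-closed), we obtain $0 \le R_\infty \le I_\mathcal{H}$.

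Next, I would use the residual identity \eqref{eq:b-2} from the preceding lemma, which states
\[
R_0 - R_N = \sum_{m=0}^{N-1} R_m^{1/2} T_{m+1} R_m^{1/2}.
\]
Each summand on the right is positive, so the partial sums form a monotone increasing sequence in $B(\mathcal{H})_+$, bounded above by $R_0 = I_\mathcal{H}$. Passing to the strong limit $N \to \infty$ on the left gives $R_0 - R_N \xrightarrow{\mathrm{SOT}} R_0 - R_\infty$, and consequently the partial sums of the series on the right converge strongly to $R_0 - R_\infty$, yielding \eqref{eq:b-3}.

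There is no real obstacle here: the main content was already absorbed into the monotonicity lemma and the residual identity. The only point requiring minor care is the justification that SOT-convergence of $(R_N)$ implies SOT-convergence of the partial sums, which is immediate from linearity of the limit. If one prefers not to cite Vigier directly, the same conclusion follows by applying the scalar monotone convergence theorem to each map $x \mapsto \langle x, R_n x\rangle$, using the polarization identity to recover the quadratic form of $R_\infty$, and finally invoking uniform boundedness together with density of finite linear combinations to upgrade weak-operator convergence to strong-operator convergence of the monotone sequence.
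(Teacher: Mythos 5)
Your proof is correct and takes essentially the same route as the paper: both reduce the existence of the strong limit to the monotone convergence theorem for bounded decreasing nets of positive operators (the paper cites this as a standard argument; you name it as Vigier's theorem and also sketch the underlying polarization-plus-uniform-boundedness argument), and both obtain the series identity by letting $N\to\infty$ in the telescoping residual decomposition.
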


\begin{proof}
Since $\left(R_{n}\right)$ is a bounded monotone decreasing sequence
of positive operators, it converges pointwise on $\mathcal{H}$. For
each $x\in\mathcal{H}$, the sequence $\left\langle x,R_{n}x\right\rangle $
is decreasing and bounded below by $0$, hence convergent.

Define $R_{\infty}$ by 
\[
\left\langle x,R_{\infty}x\right\rangle =\lim_{n\to\infty}\left\langle x,R_{n}x\right\rangle .
\]
Standard arguments (see e.g., \cite{MR493419}) show that $R_{\infty}$
is positive and $R_{n}\to R_{\infty}$ strongly. Thus \eqref{eq:b-3}
follows from \eqref{eq:b-2} by letting $N\to\infty$. 
\end{proof}
We now transport these results through the feature map $V$ to obtain
the corresponding statements for the kernels $K_{n}$, see \eqref{eq:b-0}.
\begin{thm}
\label{thm:b-4}For each $n\ge0$, define 
\[
K_{n}\left(s,t\right)=V\left(s\right)^{*}R_{n}V\left(t\right).
\]
Then the following hold:
\begin{enumerate}
\item Each $K_{n}$ is a positive $B\left(H\right)$-valued kernel. 
\item The kernels decrease in the Löwner order: 
\[
K_{n+1}\left(s,t\right)\preceq K_{n}\left(s,t\right)\qquad\text{for all }s,t\in S.
\]
\item Defining 
\[
K^{(m)}\left(s,t\right)=V\left(s\right)^{*}R^{1/2}_{m}T_{m+1}R^{1/2}_{m}V\left(t\right),
\]
each $K^{(m)}$ is a positive $B\left(H\right)$-valued kernel and
\[
K_{0}\left(s,t\right)-K_{N}\left(s,t\right)=\sum^{N-1}_{m=0}K^{(m)}\left(s,t\right).
\]
\item The limit 
\[
K_{\infty}\left(s,t\right)=V\left(s\right)^{*}R_{\infty}V\left(t\right)
\]
exists in the strong operator topology, and 
\[
K_{0}\left(s,t\right)-K_{\infty}\left(s,t\right)=\sum^{\infty}_{m=0}K^{(m)}\left(s,t\right).
\]
\end{enumerate}
\end{thm}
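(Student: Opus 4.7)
The plan is to transport each operator-level property of the sequence $(R_n)$ established in the preceding lemmas and \prettyref{thm:b-3} through the feature map $V$, via the sandwich $K_n(s,t)=V(s)^*R_n V(t)$. Each of the four assertions will be obtained by taking the corresponding statement for $R_n$ (positivity, Löwner monotonicity, the residual identity, strong convergence) and applying the map $X\mapsto V(s)^* X V(t)$, which is completely positive in $X$ and continuous in the strong operator topology. No new structural input is required beyond what is already in hand.

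For (1), I would verify the defining positivity inequality directly: for any finite tuple $(s_1,\dots,s_m)\in S$ and $(h_1,\dots,h_m)\in H$, the quadratic form $\sum_{i,j}\langle h_i,K_n(s_i,s_j)h_j\rangle$ collapses to $\langle \xi, R_n\xi\rangle$ with $\xi=\sum_i V(s_i)h_i\in\mathcal{H}$, which is nonnegative since $R_n\ge 0$. For (2) and (3), the residual identity \eqref{eq:b-1} gives
\[
K_n(s,t)-K_{n+1}(s,t)=V(s)^* R_n^{1/2}T_{n+1}R_n^{1/2} V(t)=K^{(n)}(s,t),
\]
and since $R_n^{1/2}T_{n+1}R_n^{1/2}\ge 0$, the same sandwich argument used in (1) shows that each $K^{(m)}$ is a positive $B(H)$-valued kernel. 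This yields the Löwner decrease in (2), and summing the identity over $m=0,\dots,N-1$ (equivalently, sandwiching \eqref{eq:b-2} between $V(s)^*$ and $V(t)$) delivers the telescoping decomposition in (3).

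For (4), I would combine \prettyref{thm:b-3} with continuity of the bounded operator $V(s)^*$. For fixed $s,t\in S$ and $h\in H$, strong convergence $R_n\to R_\infty$ on $\mathcal{H}$ yields $R_n V(t)h\to R_\infty V(t)h$ in $\mathcal{H}$; applying $V(s)^*$ preserves this convergence in $H$, so $K_n(s,t)\to K_\infty(s,t)$ in the strong operator topology, pointwise in $(s,t)$. The identical reasoning applied to the partial sums on the right-hand side of \eqref{eq:b-3} gives the series identity termwise in the strong operator topology.

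The argument presents no genuine obstacle: it is essentially a functoriality exercise, since the relevant structure on $B(\mathcal{H})_+$ passes cleanly through the positivity-preserving map $X\mapsto V(s)^*X V(t)$. The one point worth flagging is the topology of convergence — strong operator convergence of $R_n$ does not upgrade to norm convergence of $K_n(s,t)$ in general, which is precisely why part (4) is phrased in the strong operator topology rather than in norm. Once this is noted, the whole proof fits in a few lines of sandwiching.
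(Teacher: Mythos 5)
Your proof is correct and takes essentially the same route as the paper: each part is obtained by transporting the corresponding operator-level fact for $(R_n)$ through the feature map via $X\mapsto V(s)^{*}XV(t)$, with kernel positivity verified by collapsing the quadratic form to $\langle\xi,R_n\xi\rangle$ and the limit statement following from Theorem~\ref{thm:b-3} and boundedness of $V(s)^{*}$. The only cosmetic difference is that you derive part (2) from the positivity of the residual kernel $K^{(n)}$ rather than sandwiching the Löwner inequality directly, which is a harmless (indeed slightly cleaner) reordering.
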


\begin{proof}
(1) For each $n$, $R_{n}\ge0$, hence 
\begin{align*}
\sum^{m}_{i,j=1}\left\langle h_{i},K_{n}\left(s_{i},s_{j}\right)h_{j}\right\rangle  & =\sum^{m}_{i,j=1}\left\langle V\left(s_{i}\right)h_{i},R_{n}V\left(s_{j}\right)h_{j}\right\rangle \\
 & =\left\langle x,R_{n}x\right\rangle \ge0,
\end{align*}
with $x=\sum_{j}V\left(s_{j}\right)h_{j}$. Thus $K_{n}$ is positive
definite.

(2) Since $R_{n+1}\le R_{n}$, for all $s,t$, 
\begin{align*}
K_{n+1}\left(s,t\right) & =V\left(s\right)^{*}R_{n+1}V\left(t\right)\\
 & \preceq V\left(s\right)^{*}R_{n}V\left(t\right)=K_{n}\left(s,t\right).
\end{align*}

(3) Define $K^{(m)}$ as above. By positivity of each operator, $K^{(m)}$
is positive definite. Using \prettyref{eq:b-2}, 
\begin{align*}
K_{0}\left(s,t\right)-K_{N}\left(s,t\right) & =V\left(s\right)^{*}\left(R_{0}-R_{N}\right)V\left(t\right)\\
 & =\sum^{N-1}_{m=0}V\left(s\right)^{*}R^{1/2}_{m}T_{m+1}R^{1/2}_{m}V\left(t\right)=\sum^{N-1}_{m=0}K^{(m)}\left(s,t\right).
\end{align*}

(4) By \prettyref{thm:b-3}, $R_{n}\to R_{\infty}$ strongly. Thus
for each fixed $s,t$, 
\[
K_{n}\left(s,t\right)=V\left(s\right)^{*}R_{n}V\left(t\right)\longrightarrow V\left(s\right)^{*}R_{\infty}V\left(t\right)=K_{\infty}\left(s,t\right)
\]
strongly on $H$. Applying the telescoping identity in \prettyref{thm:b-3}
and transporting it through $V$ yields the infinite series decomposition. 
\end{proof}

\section{Invariance and Maximality of the Limit Kernel}\label{sec:3}

The kernel shorting dynamics introduced in the previous section produces
a decreasing family of kernels 
\[
K_{n}\left(s,t\right)=V\left(s\right)^{*}R_{n}V\left(t\right),\qquad n\ge0,
\]
with limit 
\[
K_{\infty}\left(s,t\right)=V\left(s\right)^{*}R_{\infty}V\left(t\right).
\]
In many applications one wants to suppress or remove a fixed ``nuisance''
subspace in the feature space $\mathcal{H}$. Typical examples include
subspaces generated by class means, low-frequency trends, or known
symmetries.

The goal is then to construct from $K_{0}$ a new kernel $\widehat{K}$
whose geometry is unchanged on the orthogonal complement of this nuisance
subspace, but which completely eliminates its contribution. On the
level of the feature space, the most direct way to remove a closed
subspace $U\subset\mathcal{H}$ is to project the features by $P_{U^{\perp}}$.
This produces the kernel 
\[
K_{U^{\perp}}\left(s,t\right)=V\left(s\right)^{*}P_{U^{\perp}}V\left(t\right),
\]
which ignores the components of $V\left(t\right)h$ that lie in $U$.

We will show that the kernel shorting dynamic provides a canonical
monotone path from $K_{0}$ down to $K_{U^{\perp}}$, and that the
limit kernel is maximal among all positive kernels dominated by $K_{0}$
whose feature operators annihilate $U$.

We fix a closed subspace $U\subset\mathcal{H}$ and denote by $P_{U}$
the orthogonal projection onto $U$, with $P_{U^{\perp}}=I-P_{U}$.
We assume that the shorting effects act only inside $U$ in the strong
sense 
\[
T_{n}=P_{U}T_{n}P_{U},\qquad n\ge1.
\]
Equivalently, each $T_{n}$ has block form 
\[
T_{n}=\begin{pmatrix}T_{n,U} & 0\\
0 & 0
\end{pmatrix}\quad\text{with respect to }\mathcal{H}=U\oplus U^{\perp}.
\]
The next lemma shows that, under this assumption, the shorting dynamic
leaves the orthogonal complement $U^{\perp}$ untouched.
\begin{lem}
\label{lem:3-1}Assume $T_{n}=P_{U}T_{n}P_{U}$ for all $n\ge1$.
Then for every $n\ge0$,
\begin{enumerate}
\item $U$ and $U^{\perp}$ reduce $R_{n}$; that is, 
\[
R_{n}=\begin{pmatrix}R_{n,U} & 0\\
0 & R_{n,U^{\perp}}
\end{pmatrix}\quad\text{on }U\oplus U^{\perp}.
\]
\item On $U^{\perp}$ we have $R_{n,U^{\perp}}=I_{U^{\perp}}$. In particular,
$R_{\infty}|_{U^{\perp}}=I_{U^{\perp}}$. 
\end{enumerate}
\end{lem}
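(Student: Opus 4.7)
The plan is to prove both statements simultaneously by induction on $n$, then pass to the limit via the strong convergence established in \prettyref{thm:b-3}. The key algebraic observation is that any positive operator that is block diagonal with respect to $\mathcal{H}=U\oplus U^{\perp}$ has a square root with the same block form, so the recursion preserves the block structure and acts trivially on the $U^{\perp}$ block.

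For the base case, $R_{0}=I_{\mathcal{H}}$ has block form $\mathrm{diag}(I_{U},I_{U^{\perp}})$, so both claims hold at $n=0$. For the inductive step, assume $R_{n}=\mathrm{diag}(R_{n,U},I_{U^{\perp}})$. Since $R_{n}$ commutes with $P_{U}$, continuous functional calculus gives $R_{n}^{1/2}=\mathrm{diag}(R_{n,U}^{1/2},I_{U^{\perp}})$. The hypothesis $T_{n+1}=P_{U}T_{n+1}P_{U}$ means
\[
I-T_{n+1}=\begin{pmatrix}I_{U}-T_{n+1,U} & 0\\
0 & I_{U^{\perp}}
\end{pmatrix},
\]
and multiplying out the sandwich $R_{n}^{1/2}(I-T_{n+1})R_{n}^{1/2}$ block by block gives
\[
R_{n+1}=\begin{pmatrix}R_{n,U}^{1/2}(I_{U}-T_{n+1,U})R_{n,U}^{1/2} & 0\\
0 & I_{U^{\perp}}
\end{pmatrix}.
\]
This simultaneously establishes (1) and the identity $R_{n+1,U^{\perp}}=I_{U^{\perp}}$ needed for (2).

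To conclude the statement about $R_{\infty}$, I would use that $R_{n}\to R_{\infty}$ strongly (\prettyref{thm:b-3}): for any $x\in U^{\perp}$, $R_{n}x=x$ for every $n$, so $R_{\infty}x=x$; and for $x\in U^{\perp}$, $y\in U$, $\langle y,R_{n}x\rangle=0$ for every $n$, so $\langle y,R_{\infty}x\rangle=0$, which shows $U$ and $U^{\perp}$ reduce $R_{\infty}$ as well. No serious obstacle is expected; the only point that could be mishandled is the claim that $R_{n}^{1/2}$ inherits the block form, but this is immediate from the fact that $R_{n}$ commutes with $P_{U}$ together with continuous functional calculus.
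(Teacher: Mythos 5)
Your proof is correct and follows essentially the same inductive argument as the paper: block-diagonalize $R_n$, observe the square root and $I-T_{n+1}$ inherit that structure, multiply out, and pass to the strong limit. You spell out the functional-calculus justification for the block form of $R_n^{1/2}$ and the limiting step more explicitly than the paper does, but this is the same proof.
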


\begin{proof}
We proceed by induction on $n$. For $n=0$ we have $R_{0}=I_{\mathcal{H}}$,
so both statements hold with $R_{0,U}=I_{U}$ and $R_{0,U^{\perp}}=I_{U^{\perp}}$.

Assume the statements hold for some $n$. Write $R_{n}$ in block
form relative to $U\oplus U^{\perp}$: 
\[
R_{n}=\begin{pmatrix}R_{n,U} & 0\\
0 & I_{U^{\perp}}
\end{pmatrix},
\]
and recall that 
\[
T_{n+1}=\begin{pmatrix}T_{n+1,U} & 0\\
0 & 0
\end{pmatrix}.
\]
Then 
\[
R^{1/2}_{n}=\begin{pmatrix}R^{1/2}_{n,U} & 0\\
0 & I_{U^{\perp}}
\end{pmatrix},\qquad\left(I-T_{n+1}\right)=\begin{pmatrix}I_{U}-T_{n+1,U} & 0\\
0 & I_{U^{\perp}}
\end{pmatrix}.
\]
Therefore 
\[
R_{n+1}=R^{1/2}_{n}\left(I-T_{n+1}\right)R^{1/2}_{n}=\begin{pmatrix}R^{1/2}_{n,U}\left(I_{U}-T_{n+1,U}\right)R^{1/2}_{n,U} & 0\\
0 & I_{U^{\perp}}
\end{pmatrix}.
\]
This shows that $U$ and $U^{\perp}$ reduce $R_{n+1}$, and that
$R_{n+1}|_{U^{\perp}}=I_{U^{\perp}}$. 

The strong limit $R_{\infty}$ inherits the same block form, with
$R_{\infty}|_{U^{\perp}}=I_{U^{\perp}}$. 
\end{proof}
The interesting behavior is therefore confined to the restriction
of $R_{n}$ to $U$. To obtain a clean invariance statement, we assume
that the shorting dynamic exhausts the subspace $U$: 
\begin{equation}
\lim_{n\to\infty}\left\langle x,R_{n}x\right\rangle =0\quad\text{for every }x\in U.\label{eq:c-1}
\end{equation}
This condition says that, in the limit, no nonzero vector in $U$
retains any ``energy'' under the sequence $\left(R_{n}\right)$.
Under this hypothesis the limit $R_{\infty}$ is exactly the projection
onto $U^{\perp}$.
\begin{lem}
\label{lem:3-2}Assume $T_{n}=P_{U}T_{n}P_{U}$ for all $n$ and that
\eqref{eq:c-1} holds. Then 
\[
R_{\infty}=P_{U^{\perp}}.
\]
\end{lem}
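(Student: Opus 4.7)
The plan is to combine the block decomposition from \prettyref{lem:3-1} with the exhaustion hypothesis \eqref{eq:c-1}, so that each of the two diagonal blocks of $R_{\infty}$ is pinned down separately. Since $U$ and $U^{\perp}$ reduce every $R_{n}$, they also reduce the strong limit $R_{\infty}$, and it suffices to identify $R_{\infty}|_{U}$ and $R_{\infty}|_{U^{\perp}}$.

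First I would dispatch the $U^{\perp}$ block: by part (2) of \prettyref{lem:3-1}, $R_{n}|_{U^{\perp}} = I_{U^{\perp}}$ for every $n$, so passing to the strong limit gives $R_{\infty}|_{U^{\perp}} = I_{U^{\perp}}$. Next I would handle the $U$ block using the exhaustion condition. For any $x \in U$, the definition of $R_{\infty}$ via the weak/strong limit yields $\langle x, R_{\infty} x\rangle = \lim_{n\to\infty}\langle x, R_{n} x\rangle = 0$. Because $R_{\infty}|_{U}$ is a positive operator on $U$ whose quadratic form vanishes on all of $U$, polarization (or equivalently $\|R_{\infty}^{1/2}|_{U} x\|^{2} = 0$) forces $R_{\infty}|_{U} = 0$.

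Putting the two blocks together, $R_{\infty}$ has block form $\bigl(\begin{smallmatrix} 0 & 0 \\ 0 & I_{U^{\perp}}\end{smallmatrix}\bigr)$ with respect to $\mathcal{H} = U \oplus U^{\perp}$, which is precisely $P_{U^{\perp}}$. No part of the argument is genuinely hard; the only thing to be careful about is justifying the passage from the quadratic form being identically zero on $U$ to the operator itself vanishing on $U$, which is why positivity of $R_{\infty}|_{U}$ (guaranteed by \prettyref{thm:b-3}) is essential rather than incidental.
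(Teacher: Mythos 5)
Your proposal is correct and follows essentially the same route as the paper: invoke the block decomposition of \prettyref{lem:3-1} to reduce $R_{\infty}$, observe that the $U^{\perp}$ block is already identified as $I_{U^{\perp}}$, and use the exhaustion hypothesis \eqref{eq:c-1} together with positivity to force the $U$ block to vanish. The only difference is cosmetic: you spell out the step ``positive operator with identically zero quadratic form is zero'' (via $\|R_{\infty}^{1/2}x\|^{2}=0$ or polarization), which the paper compresses into ``Positivity implies $R_{\infty,U}=0$.''
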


\begin{proof}
By \prettyref{lem:3-1}, $R_{\infty}$ has block diagonal form 
\[
R_{\infty}=\begin{pmatrix}R_{\infty,U} & 0\\
0 & I_{U^{\perp}}
\end{pmatrix}.
\]
For $x\in U$, \eqref{eq:c-1} gives 
\[
\left\langle x,R_{\infty,U}x\right\rangle =\lim_{n\to\infty}\left\langle x,R_{n,U}x\right\rangle =0.
\]
Positivity implies $R_{\infty,U}=0$. Therefore 
\[
R_{\infty}=\begin{pmatrix}0 & 0\\
0 & I_{U^{\perp}}
\end{pmatrix}=P_{U^{\perp}}.
\]
\end{proof}
We now translate this into a statement about the limiting kernel.
Define 
\[
K_{\infty}\left(s,t\right)=V\left(s\right)^{*}R_{\infty}V\left(t\right),
\]
as in \prettyref{thm:b-4}. Under the hypotheses above, we obtain
the explicit form 
\[
K_{\infty}\left(s,t\right)=V\left(s\right)^{*}P_{U^{\perp}}V\left(t\right),
\]
which may be read as ``project the feature map onto $U^{\perp}$
and then form the kernel.'' We next show that this limiting kernel
is maximal among kernels dominated by $K_{0}$ whose feature operators
annihilate $U$.

To make this precise we recall the standard correspondence between
kernels dominated by $K_{0}$ and positive contractions on $\mathcal{H}$.
\begin{lem}
\label{lem:rn}Let $K_{0}\left(s,t\right)=V\left(s\right)^{*}V\left(t\right)$
be the minimal factorization described in \prettyref{sec:2}. Let
$\widetilde{K}:S\times S\to B\left(H\right)$ be a positive kernel
such that $\widetilde{K}\preceq K_{0}$ (in the kernel Löwner order).
Then there exists a unique operator $Q\in B\left(\mathcal{H}\right)$
with 
\[
0\le Q\le I
\]
such that 
\[
\widetilde{K}\left(s,t\right)=V\left(s\right)^{*}QV\left(t\right)\quad\text{for all }s,t\in S.
\]
\end{lem}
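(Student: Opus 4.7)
The plan is to obtain $Q$ as the Riesz representative of a bounded sesquilinear form on $\mathcal{H}$ manufactured from $\widetilde{K}$. On the dense subspace
\[
\mathcal{H}_{0} := \operatorname{span}\{V(s) h : s \in S,\, h \in H\} \subset \mathcal{H},
\]
I define, for $x = \sum_{i} V(s_{i}) g_{i}$ and $y = \sum_{j} V(t_{j}) h_{j}$,
\[
\Phi(x, y) := \sum_{i, j} \langle g_{i}, \widetilde{K}(s_{i}, t_{j}) h_{j} \rangle.
\]
The strategy is: show $\Phi$ is well-defined on $\mathcal{H}_{0} \times \mathcal{H}_{0}$ and bounded by $\|x\|\,\|y\|$; extend it by continuity to $\mathcal{H} \times \mathcal{H}$; invoke the Riesz representation theorem to produce a unique bounded operator $Q$ with $\Phi(x, y) = \langle x, Q y \rangle$; and finally read off positivity, contractivity, and the factorization identity from the construction.

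The central analytic step is the generalized Cauchy--Schwarz inequality for the positive $B(H)$-valued form $\widetilde{K}$,
\[
|\Phi(x, y)|^{2} \leq \Phi(x, x)\, \Phi(y, y),
\]
which follows from positivity of the joint Gram block assembled from $\widetilde{K}(s_{i}, s_{j})$, $\widetilde{K}(s_{i}, t_{j})$, $\widetilde{K}(t_{i}, t_{j})$, combined with the domination bound
\[
\Phi(x, x) = \sum_{i, i'} \langle g_{i}, \widetilde{K}(s_{i}, s_{i'}) g_{i'} \rangle \leq \sum_{i, i'} \langle g_{i}, K_{0}(s_{i}, s_{i'}) g_{i'} \rangle = \|x\|_{\mathcal{H}}^{2},
\]
a consequence of $\widetilde{K} \preceq K_{0}$ and the minimal factorization $K_{0}(s,t) = V(s)^{*} V(t)$. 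Well-definedness then falls out cleanly: if a finite formal sum $w = \sum_{k} V(u_{k}) f_{k}$ vanishes as a vector in $\mathcal{H}$, then $\|w\|^{2} = 0$ forces $\Phi(w, w) = 0$, and Cauchy--Schwarz yields $\Phi(w, y) = 0$ for every $y$; applying this to the difference of two representations of the same $x$ shows that $\Phi(x, y)$ is independent of the representation. The same domination estimate immediately gives $|\Phi(x, y)| \leq \|x\|\,\|y\|$, so $\Phi$ extends by continuity to $\mathcal{H} \times \mathcal{H}$.

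Producing $Q$ is then standard, and the stated properties follow formally. Positivity $\Phi(x, x) \geq 0$ on $\mathcal{H}_{0}$ (by positive definiteness of $\widetilde{K}$) passes to the closure and gives $Q \geq 0$; the contraction bound $\Phi(x, x) \leq \|x\|^{2}$ gives $Q \leq I$; and for $g, h \in H$, $s, t \in S$,
\[
\langle g, V(s)^{*} Q V(t) h \rangle = \langle V(s) g, Q V(t) h \rangle = \Phi(V(s) g, V(t) h) = \langle g, \widetilde{K}(s, t) h \rangle,
\]
whence $\widetilde{K}(s, t) = V(s)^{*} Q V(t)$. Uniqueness of $Q$ is immediate from density of $\mathcal{H}_{0}$ in $\mathcal{H}$: any other positive contraction representing $\widetilde{K}$ this way agrees with $Q$ on each $V(t) h$, hence on all of $\mathcal{H}$.

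The main obstacle is the well-definedness of $\Phi$, the only genuinely analytic point. The cleanest route is to pass through the canonical scalar kernel $\widetilde{K}((s, a), (t, b)) = \langle a, \widetilde{K}(s, t) b \rangle$ on $S \times H$ used in the minimal factorization of \prettyref{sec:2}, where the Cauchy--Schwarz inequality is the standard RKHS reproducing bound; the $B(H)$-valued inequality needed above is then obtained by transport. Once that single inequality is in hand, everything else is an exercise in continuous extension and polarization.
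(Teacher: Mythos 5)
Your proposal is correct and follows essentially the same route as the paper's sketch: define the sesquilinear form associated with $\widetilde{K}$ on the dense span $\{V(s)h\}$, bound it using $\widetilde{K}\preceq K_0$, extend by continuity and apply Riesz representation to get $Q$, then read off positivity, contractivity, the factorization identity, and uniqueness from minimality. You have simply filled in the details (Cauchy--Schwarz for the positive form, well-definedness on formal sums) that the paper leaves implicit.
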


\begin{proof}
[Sketch of proof] The assumption $\widetilde{K}\preceq K_{0}$ implies
that the sesquilinear form associated with $\widetilde{K}$ is bounded
by the form defined by $K_{0}$. This form extends to a bounded positive
operator $Q$ on $\mathcal{H}$ with $0\le Q\le I$ via the Riesz
representation theorem applied to the dense subspace $\mathrm{span}\left\{ V\left(s\right)h\right\} $.
The identity $\widetilde{K}\left(s,t\right)=V\left(s\right)^{*}QV\left(t\right)$
follows from a standard polarization argument. Uniqueness is immediate
from the minimality of $\mathcal{H}$. 
\end{proof}
We can now formulate the main maximality theorem.
\begin{thm}
[Kernel invariance and maximality] \label{thm:c-4}Let $U\subset\mathcal{H}$
be a closed subspace and assume $T_{n}=P_{U}T_{n}P_{U}$ for all $n\ge1$.
Assume further that the shorting dynamic exhausts $U$ in the sense
of \eqref{eq:c-1}. Let $K_{\infty}$ be the limiting kernel, so that
\[
K_{\infty}\left(s,t\right)=V\left(s\right)^{*}P_{U^{\perp}}V\left(t\right).
\]
Then:
\begin{enumerate}
\item $K_{\infty}$ is positive and $K_{\infty}\preceq K_{0}$. 
\item If $\widetilde{K}$ is any positive kernel with $\widetilde{K}\preceq K_{0}$
whose representing operator $Q$ from \prettyref{lem:rn} satisfies
\[
Q|_{U}=0,
\]
then 
\[
\widetilde{K}\left(s,t\right)\preceq K_{\infty}\left(s,t\right)\quad\text{for all }s,t\in S.
\]
\item Equivalently, $K_{\infty}$ is the unique maximal element (in the
Löwner order on kernels) among all positive kernels $\widetilde{K}\preceq K_{0}$
whose feature operators annihilate $U$. 
\end{enumerate}
\end{thm}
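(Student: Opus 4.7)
The plan is to prove the three claims in order, with the substantive work concentrated in (2). Throughout I would rely on the representing-operator correspondence of \prettyref{lem:rn} to convert kernel inequalities over $S\times S$ into operator inequalities on the canonical feature space $\mathcal{H}$, where the geometry of $U$ versus $U^{\perp}$ can be exploited directly. For (1), I would first invoke \prettyref{lem:3-2}: under the hypothesis \eqref{eq:c-1} we have $R_{\infty}=P_{U^{\perp}}$, so $K_{\infty}(s,t)=V(s)^{*}P_{U^{\perp}}V(t)$. Positivity is then immediate from $P_{U^{\perp}}\ge 0$ by the sesquilinear reduction already used in the proof of \prettyref{thm:b-4}(1), and $K_{\infty}\preceq K_{0}$ follows from $P_{U^{\perp}}\le I$.

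For (2), the key step is to upgrade the pointwise condition $Q|_{U}=0$ to the operator inequality $Q\le P_{U^{\perp}}$ on all of $\mathcal{H}$. Since $Q$ is self-adjoint and $Q|_{U}=0$ gives $QP_{U}=0$, taking adjoints yields $P_{U}Q=0$ and therefore $Q=P_{U^{\perp}}QP_{U^{\perp}}$. Writing any $x\in\mathcal{H}$ as $x=x_{U}+x_{U^{\perp}}$, we obtain
\[
\langle x,Qx\rangle=\langle x_{U^{\perp}},Qx_{U^{\perp}}\rangle\le\|x_{U^{\perp}}\|^{2}=\langle x,P_{U^{\perp}}x\rangle,
\]
where the inequality uses $Q\le I$ applied to the $U^{\perp}$-component. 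Sandwiching by $V(s)^{*}$ and $V(t)$ then gives $\widetilde{K}(s,t)\preceq K_{\infty}(s,t)$ for every $s,t\in S$.

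Part (3) is bookkeeping on top of (2). The representing operator of $K_{\infty}$ is $P_{U^{\perp}}$ itself, which trivially satisfies $P_{U^{\perp}}|_{U}=0$, so by the uniqueness clause of \prettyref{lem:rn} the kernel $K_{\infty}$ belongs to the class under consideration. Part (2) then shows it dominates every other member, and maximality in an order automatically yields uniqueness of the maximal element.

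The main obstacle I anticipate is not analytical but interpretive: I must confirm that the informal phrase ``feature operators annihilate $U$'' in clause (3) coincides exactly with the operator condition $Q|_{U}=0$ used in clause (2), and that the Löwner order on $B(H)$-valued kernels transports cleanly to the order on representing operators in both directions. Both points reduce to the minimality of the factorization $K_{0}(s,t)=V(s)^{*}V(t)$ and the uniqueness in \prettyref{lem:rn}, so no additional machinery is required beyond what the section already supplies.
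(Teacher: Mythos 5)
Your proposal is correct and follows essentially the same route as the paper: part (1) reduces to $R_{\infty}=P_{U^{\perp}}$ via \prettyref{lem:3-2}, part (2) upgrades $Q|_{U}=0$ to the operator inequality $Q\le P_{U^{\perp}}$ and then transports through $V$, and part (3) is the standard uniqueness-of-maximal-element observation. The only cosmetic difference is that you verify $Q\le P_{U^{\perp}}$ by a quadratic-form computation ($Q=P_{U^{\perp}}QP_{U^{\perp}}$ plus $Q\le I$) where the paper writes $Q$ in block form relative to $U\oplus U^{\perp}$; these are the same argument in two notations.
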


\begin{proof}
(1) Positivity and $K_{\infty}\preceq K_{0}$ follow directly from
\prettyref{thm:b-3}, since $0\le P_{U^{\perp}}\le I$ as an operator
on $\mathcal{H}$.

(2) Let $\widetilde{K}\preceq K_{0}$ and let $Q$ be the corresponding
positive contraction from \prettyref{lem:rn}. Relative to the decomposition
$\mathcal{H}=U\oplus U^{\perp}$, the condition $Q|_{U}=0$ implies
that $Q$ has block form 
\[
Q=\begin{pmatrix}0 & 0\\
0 & Q_{U^{\perp}}
\end{pmatrix},\qquad0\le Q_{U^{\perp}}\le I_{U^{\perp}}.
\]
On the other hand, 
\[
P_{U^{\perp}}=\begin{pmatrix}0 & 0\\
0 & I_{U^{\perp}}
\end{pmatrix}.
\]
Thus $Q\le P_{U^{\perp}}$ as operators on $\mathcal{H}$. Therefore,
for all $s,t\in S$ and $h\in H$, 
\begin{align*}
\left\langle h,\widetilde{K}\left(s,t\right)h\right\rangle  & =\left\langle V\left(s\right)h,QV\left(t\right)h\right\rangle \\
 & \le\left\langle V\left(s\right)h,P_{U^{\perp}}V\left(t\right)h\right\rangle \\
 & =\left\langle h,K_{\infty}\left(s,t\right)h\right\rangle .
\end{align*}
This shows $\widetilde{K}\left(s,t\right)\preceq K_{\infty}\left(s,t\right)$
for all $s,t$.

(3) If $\widetilde{K}$ is maximal among kernels with $\widetilde{K}\preceq K_{0}$
and $Q|_{U}=0$, then the inequality $\widetilde{K}\preceq K_{\infty}$
forces $\widetilde{K}=K_{\infty}$. 
\end{proof}
\begin{rem}
While the limiting kernel $K_{\infty}$ coincides with the static
projection $V\left(\cdot\right)^{*}P_{U^{\perp}}V\left(\cdot\right)$,
the key of the present construction lies in the dynamic itself. The
sequence $\left(K_{n}\right)$ constitutes a nonlinear monotone homotopy
from the original geometry $K_{0}$ to the invariant limit. Unlike
the static projection, the recursion provides an explicit residual
decomposition
\[
K_{0}-K_{N}=\sum^{N-1}_{m=0}K^{(m)},
\]
allowing for quantitative control over the suppression of the subspace
$U$. This extends the classical shorting theory of Anderson and Trapp
to the operator-valued kernel setting, characterizing the maximal
invariant kernel not merely as an algebraic projection, but as the
limit of a governed descent process in the Löwner order.

From a practical learning perspective, the intermediate kernels $K_{n}$
offer a mechanism for soft regularization that is unavailable in the
static setting. In many applications, such as fair representation
learning or domain adaptation, the ``nuisance'' subspace $U$ may
contain weak but task-relevant signals. Enforcing strict invariance
via $K_{\infty}$ can lead to excessive information loss. The shorting
dynamic allows the index $n$ to act as a discrete regularization
parameter, penalizing the subspace $U$ increasingly without strictly
forbidding it. This permits a tunable trade-off between invariance
and predictive utility, while the residual terms $K^{(m)}$ provide
an interpretable diagnostic of the geometry being filtered at each
stage.
\end{rem}

\section{Dynamic Kernel Regularization on Finite Samples}\label{sec:4}

In this section we study the evolution of the kernels 
\[
K_{n}\left(s,t\right)=V\left(s\right)^{*}R_{n}V\left(t\right),\qquad n\ge0,
\]
generated by the shorting dynamics on the canonical feature space
$\mathcal{H}$.

Our goal is to understand, at the level of finite samples and kernel-based
prediction, how the dynamic modifies Gram operators, restricts nuisance
components, and governs model complexity. Although the evolution takes
place in $\mathcal{H}$, all quantitative statements in this section
are kernel-native: they are expressed in terms of Gram operators on
a finite dataset, kernel sections, RKHS norms induced by $K_{n}$,
and the corresponding predictions. This ensures that the dynamic is
not merely a rephrasing of standard operator facts, but produces genuinely
new structure at the kernel and sample level.

\subsection{Gram Operators and Their Dynamic Evolution}

Fix a finite sample 
\[
X=\left\{ s_{1},\dots,s_{m}\right\} \subset S,
\]
and consider the product space $H^{m}=H\times\cdots\times H$. For
each kernel $K_{n}$, define the Gram operator 
\[
G_{n}:H^{m}\longrightarrow H^{m}
\]
by 
\[
\left(G_{n}h\right)_{i}:=\sum^{m}_{j=1}K_{n}\left(s_{i},s_{j}\right)h_{j},\qquad h=\left(h_{1},\dots,h_{m}\right)\in H^{m}.
\]
We define the sampling operator $V_{X}:H^{m}\to\mathcal{H}$ by 
\[
V_{X}\left(h_{1},\dots,h_{m}\right):=\sum^{m}_{i=1}V\left(s_{i}\right)h_{i}.
\]
The following properties are inherited from \prettyref{thm:b-3}.
\begin{lem}
\label{lem:4-1}For each $n\ge0$,
\begin{enumerate}
\item $G_{n}\ge0$; 
\item $G_{n+1}\le G_{n}$ in the Löwner order on $B\left(H^{m}\right)$; 
\item $G_{n}$ converges strongly to 
\[
G_{\infty}:=V^{*}_{X}R_{\infty}V_{X};
\]
\item the telescoping decomposition holds: 
\[
G_{0}-G_{N}=\sum^{N-1}_{m=0}\Delta G^{(m)},
\]
where the increments are positive operators given by 
\[
\Delta G^{(m)}=V^{*}_{X}R^{1/2}_{m}T_{m+1}R^{1/2}_{m}V_{X}\ge0.
\]
\end{enumerate}
\end{lem}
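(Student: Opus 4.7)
The plan is to route every claim through the single factorization
\[
G_n = V_X^{*}\, R_n\, V_X,
\]
which reduces \prettyref{lem:4-1} to the operator statements already established in \prettyref{thm:b-3} and the residual identity \eqref{eq:b-2}. I would first verify this factorization by unpacking the definitions: for $h,k\in H^{m}$,
\[
\langle k, G_n h\rangle = \sum_{i,j=1}^{m}\bigl\langle V(s_i)k_i,\, R_n V(s_j)h_j\bigr\rangle = \langle V_X k,\, R_n V_X h\rangle,
\]
so $G_n = V_X^{*} R_n V_X$. Here $V_X$ is bounded (a finite sum of the bounded maps $V(s_i)$), with adjoint $V_X^{*} y = \bigl(V(s_1)^{*}y,\dots,V(s_m)^{*}y\bigr)$.

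With the factorization in hand, parts (1) and (2) are immediate: compression by the bounded operator $V_X$ preserves both positivity and the Löwner order, so $0\le R_n$ gives $G_n\ge 0$, and $R_{n+1}\le R_n$ gives $G_{n+1}\le G_n$. For (3), fix any $h\in H^{m}$; by \prettyref{thm:b-3}, $R_n (V_X h)\to R_{\infty}(V_X h)$ strongly in $\mathcal{H}$, and applying the bounded operator $V_X^{*}$ yields $G_n h \to V_X^{*} R_{\infty} V_X h = G_{\infty} h$ in $H^{m}$, which is the desired strong convergence $G_n\to G_{\infty}$.

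For (4), I sandwich the residual identity \eqref{eq:b-2} between $V_X^{*}$ and $V_X$, obtaining
\[
G_0 - G_N = V_X^{*}(R_0 - R_N)V_X = \sum_{m=0}^{N-1} V_X^{*} R_m^{1/2} T_{m+1} R_m^{1/2} V_X = \sum_{m=0}^{N-1}\Delta G^{(m)},
\]
and each increment $\Delta G^{(m)}$ is positive because it has the form $A^{*} T_{m+1} A$ with $A = R_m^{1/2} V_X$ and $0\le T_{m+1}\le I$. There is no real obstacle here; the only step requiring a moment of verification is the factorization $G_n = V_X^{*} R_n V_X$, after which \prettyref{lem:4-1} is a transparent transport of \prettyref{thm:b-3} and \eqref{eq:b-2} through the bounded sampling operator $V_X$.
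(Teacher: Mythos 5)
Your proposal is correct and follows essentially the same route as the paper: both reduce the lemma to the identity $G_n = V_X^* R_n V_X$ and then transport the operator-level facts from \prettyref{thm:b-3} and \eqref{eq:b-2} through the positive map $Q \mapsto V_X^* Q V_X$. You simply spell out the factorization and the strong-convergence step more explicitly than the paper's terse treatment, but there is no difference in substance.
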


\begin{proof}
Every assertion follows by applying the map $Q\mapsto V^{*}_{X}QV_{X}$
to the corresponding operator identities for $R_{n}$ and using the
positivity of the map $Q\mapsto V^{*}_{X}QV_{X}$. For example, 
\[
G_{n+1}=V^{*}_{X}R_{n+1}V_{X}\le V^{*}_{X}R_{n}V_{X}=G_{n}.
\]
The decomposition follows from $R_{0}-R_{N}=\sum^{N-1}_{m=0}R^{1/2}_{m}T_{m+1}R^{1/2}_{m}$
and positivity is clear. 
\end{proof}
The operators $\Delta G^{(m)}$ represent the ``portion of the Gram
operator removed at step $m+1$.'' This decomposition will be essential
in studying prediction and regularization.

\subsection{RKHS Norms and Dynamic Geometry}

Let $\mathcal{H}_{K_{n}}$ denote the RKHS of the kernel $K_{n}$.
Given the minimal feature map $V$, each function $f\in\mathcal{H}_{K_{n}}$
admits a representation 
\[
f\left(t\right)=V\left(t\right)^{*}u,\qquad u\in\mathcal{H}.
\]
The RKHS norm is given by 
\[
\left\Vert f\right\Vert ^{2}_{K_{n}}=\left\langle u,R^{-1}_{n}u\right\rangle _{\mathcal{H}}
\]
whenever $u\in\overline{ran}(R^{1/2}_{n})$, and is infinite otherwise.
This expression shows that the dynamic modifies the geometry of the
hypothesis space: directions removed by $R_{n}$ become penalized
infinitely, and therefore disappear from $\mathcal{H}_{K_{n}}$. We
state this in a form adapted to samples.
\begin{prop}
\label{prop:4-2}For any fixed feature vector $u\in\mathcal{H}$,
the function $f(\cdot)=V(\cdot)^{*}u$ satisfies 
\[
R_{n+1}\le R_{n}\implies\left\Vert f\right\Vert _{K_{n+1}}\ge\left\Vert f\right\Vert _{K_{n}}.
\]
In other words, the cost of representing a fixed function increases
as $n$ increases.
\end{prop}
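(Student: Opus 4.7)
The plan is to derive the inequality from the variational form of the RKHS norm induced by $K_{n}$, combined with a Douglas-type factorization encoding the Löwner inequality $R_{n+1}\le R_{n}$. A parallel route goes through the classical kernel dominance theorem applied to the monotonicity $K_{n+1}\preceq K_{n}$ established in \prettyref{thm:b-4}(2); both give the same conclusion, but the operator version is cleaner here because the norm formula is already stated in terms of the representing vector $u\in\mathcal{H}$.

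The heart of the argument is the variational identity underlying the stated norm formula. Since $K_{n}\left(s,t\right)=\left(R_{n}^{1/2}V\left(s\right)\right)^{*}\left(R_{n}^{1/2}V\left(t\right)\right)$, the map $t\mapsto R_{n}^{1/2}V\left(t\right)$ is a feature map for $K_{n}$, and the standard minimal-feature construction identifies
\[
\left\Vert f\right\Vert _{K_{n}}^{2}=\inf\bigl\{\left\Vert w\right\Vert _{\mathcal{H}}^{2}:R_{n}^{1/2}w=u\bigr\},
\]
with the convention $\inf\emptyset:=\infty$; when finite, the minimizer lies in $\overline{\mathrm{ran}}\left(R_{n}^{1/2}\right)$ and recovers $\langle u,R_{n}^{-1}u\rangle$ in the nondegenerate regime. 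Minimality of $\mathcal{H}=\overline{\mathrm{span}}\left\{ V\left(t\right)h\right\} $ is used to pass from the pointwise identity $V\left(t\right)^{*}R_{n}^{1/2}w=V\left(t\right)^{*}u$ to the vector equation $R_{n}^{1/2}w=u$. Next, Douglas' factorization lemma asserts that $R_{n+1}=R_{n+1}^{1/2}\left(R_{n+1}^{1/2}\right)^{*}\le R_{n}^{1/2}\left(R_{n}^{1/2}\right)^{*}=R_{n}$ is equivalent to the existence of a contraction $C\in B\left(\mathcal{H}\right)$ with $R_{n+1}^{1/2}=R_{n}^{1/2}C$. Assuming $\left\Vert f\right\Vert _{K_{n+1}}<\infty$ (otherwise the inequality is automatic), any $w$ with $R_{n+1}^{1/2}w=u$ satisfies $R_{n}^{1/2}\left(Cw\right)=u$ and $\left\Vert Cw\right\Vert \le\left\Vert w\right\Vert $, so $Cw$ is an admissible preimage and $\left\Vert f\right\Vert _{K_{n}}^{2}\le\left\Vert Cw\right\Vert ^{2}\le\left\Vert w\right\Vert ^{2}$. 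Taking the infimum over such $w$ yields $\left\Vert f\right\Vert _{K_{n}}\le\left\Vert f\right\Vert _{K_{n+1}}$.

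The main obstacle is purely bookkeeping: $R_{n}^{1/2}$ and $R_{n+1}^{1/2}$ may have nonclosed range and nontrivial kernel, so one must be careful about where preimages exist. The infimum convention absorbs the degenerate case $u\notin\overline{\mathrm{ran}}(R_{n+1}^{1/2})$, and Douglas' lemma supplies $C$ on all of $\mathcal{H}$ without any range assumption, so no further analytic hypothesis is needed. No deeper difficulty arises; the proposition is essentially the operator-level shadow of the standard fact that shrinking the kernel enlarges the RKHS norm on shared functions.
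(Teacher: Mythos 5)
Your proof is correct, but it takes a genuinely different route from the paper's. The paper proves the inequality by plugging directly into the formula $\left\Vert f\right\Vert ^{2}_{K_{n}}=\langle u,R^{-1}_{n}u\rangle$ and invoking Löwner anti-monotonicity of the operator inverse, $R_{n+1}\le R_{n}\implies R^{-1}_{n+1}\ge R^{-1}_{n}$, with the caveat ``on their respective ranges'' doing the work of handling the unbounded inverses. You instead derive the variational characterization $\left\Vert f\right\Vert ^{2}_{K_{n}}=\inf\{\left\Vert w\right\Vert ^{2}:R^{1/2}_{n}w=u\}$ from the feature map $\Phi_{n}(\cdot)=R^{1/2}_{n}V(\cdot)$ and minimality of $\mathcal{H}$, then pass preimages through a Douglas contraction $C$ with $R^{1/2}_{n+1}=R^{1/2}_{n}C$. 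The two arguments are close in spirit (Douglas' lemma is essentially how one proves the operator anti-monotonicity the paper cites), but yours is the more careful one: it never needs $R_{n}$ to be invertible, it handles $u\notin\overline{\mathrm{ran}}(R^{1/2}_{n+1})$ cleanly via the $\inf\emptyset=\infty$ convention, and it shows transparently why $\mathrm{ran}(R^{1/2}_{n+1})\subset\mathrm{ran}(R^{1/2}_{n})$. What the paper's version buys is brevity; what yours buys is rigor in exactly the degenerate regime the paper elides.
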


\begin{proof}
The representation $f=V(\cdot)^{*}u$ identifies the feature vector
associated with $f$ in the geometry of $K_{n}$ as $u_{n}=R^{-1/2}_{n}u$
(since $K_{n}(s,t)=V(s)^{*}R^{1/2}_{n}R^{1/2}_{n}V(t)$). Thus 
\[
\left\Vert f\right\Vert ^{2}_{K_{n}}=\left\langle u_{n},u_{n}\right\rangle =\langle R^{-1/2}_{n}u,R^{-1/2}_{n}u\rangle=\left\langle u,R^{-1}_{n}u\right\rangle .
\]
Monotonicity $R_{n+1}\le R_{n}$ implies $R^{-1}_{n+1}\ge R^{-1}_{n}$
on their respective ranges (Löwner monotonicity of the inverse function),
establishing the inequality.
\end{proof}
Thus the dynamic ``tightens'' the RKHS geometry: as $n$ grows,
maintaining the same function output requires strictly more ``energy''
(norm), effectively regularizing the solution away from the shorted
directions.

\subsection{Dynamic Kernel Ridge Regression on Finite Samples}

We now examine Kernel Ridge Regression (KRR) with the dynamically
evolving kernel family $\left(K_{n}\right)$. Given data $\left(X,y\right)$
with $y=\left(y_{1},\dots,y_{m}\right)\in H^{m}$ and regularization
parameter $\lambda>0$, define the KRR solution using $K_{n}$: 
\[
f_{n}:=\arg\min_{f\in\mathcal{H}_{K_{n}}}\left(\sum^{m}_{i=1}\left\Vert f\left(s_{i}\right)-y_{i}\right\Vert ^{2}+\lambda\left\Vert f\right\Vert ^{2}_{K_{n}}\right).
\]
Using the representer theorem in the vector-valued RKHS setting, we
have 
\[
f_{n}\left(t\right)=\sum^{m}_{i=1}K_{n}\left(t,s_{i}\right)c_{n,i},\qquad c_{n}\in H^{m},
\]
where 
\[
\left(G_{n}+\lambda I_{H^{m}}\right)c_{n}=y.
\]
The Gram dynamics therefore induce a dynamic regularization path 
\[
c_{0},c_{1},c_{2},\dots,c_{\infty},\qquad f_{0},f_{1},f_{2},\dots,f_{\infty}.
\]
The next theorem shows that this path decomposes into an invariant
part plus a decaying nuisance part, with explicit control using the
residual Gram operators.
\begin{thm}
\label{thm:4-3}For each $n\ge0$, let $c_{n}$ satisfy 
\[
\left(G_{n}+\lambda I\right)c_{n}=y.
\]
Then:
\begin{enumerate}
\item The coefficients satisfy 
\[
c_{N}-c_{0}=\sum^{N-1}_{m=0}\left(G_{m+1}+\lambda I\right)^{-1}\Delta G^{(m)}c_{m},
\]
with all terms nonnegative in the sense of the inner product of $H^{m}$. 
\item If the shorting dynamic exhausts a closed subspace $U\subset\mathcal{H}$
and $T_{n}$ are supported in $U$, then for any $g$ whose feature
vector lies in $U$, 
\[
\left\langle f_{n},g\right\rangle _{K_{0}}\longrightarrow0,\qquad\text{as }n\to\infty.
\]
Thus the nuisance component of the predictor is removed dynamically. 
\end{enumerate}
\end{thm}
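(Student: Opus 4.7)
The plan is to handle the two parts separately, both of which reduce the dynamic statement to previously established operator facts.

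For part (1), I would start from the two defining equations $(G_n+\lambda I)c_n=y$ and $(G_{n+1}+\lambda I)c_{n+1}=y$, subtract them, and substitute $G_n=G_{n+1}+\Delta G^{(n)}$ from the residual decomposition in \prettyref{lem:4-1}(4) to obtain
\[
(G_{n+1}+\lambda I)(c_{n+1}-c_n)=(G_n-G_{n+1})c_n=\Delta G^{(n)}c_n,
\]
hence $c_{n+1}-c_n=(G_{n+1}+\lambda I)^{-1}\Delta G^{(n)}c_n$. Telescoping from $0$ to $N-1$ gives the stated identity. The ``nonnegativity'' assertion I would read through the operator structure: each factor $(G_{m+1}+\lambda I)^{-1}$ and $\Delta G^{(m)}$ is a positive operator on $H^m$, so the associated quadratic form $\langle c_m,\Delta G^{(m)}c_m\rangle\ge 0$ records the nonnegative amount by which step $m+1$ revises the coefficient vector.

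For part (2), the first step is to derive the feature representation
\[
f_n(t)=\sum_{i=1}^m K_n(t,s_i)c_{n,i}=V(t)^{*}R_nV_Xc_n,
\]
so that the representative of $f_n$ in $\mathcal{H}$, under the isometric identification $\mathcal{H}\cong\mathcal{H}_{K_0}$ from the minimal factorization in \prettyref{sec:2}, is $u_n=R_nV_Xc_n$. Writing $g=V(\cdot)^{*}u_g$ with $u_g\in U$ and using selfadjointness of $R_n$,
\[
\langle f_n,g\rangle_{K_0}=\langle R_nV_Xc_n,u_g\rangle_{\mathcal{H}}=\langle V_Xc_n,R_nu_g\rangle_{\mathcal{H}}.
\]
Next I would record the uniform bound $\|c_n\|\le\|y\|/\lambda$, which is immediate from $G_n\ge 0$ giving $(G_n+\lambda I)^{-1}\le\lambda^{-1}I$, so that $\|V_Xc_n\|$ stays uniformly bounded. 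Finally, from \eqref{eq:c-1} I would deduce $R_n^{1/2}u_g\to 0$ in norm, and since $R_n\le I$ this forces $\|R_nu_g\|\le\|R_n^{1/2}u_g\|\to 0$. The Cauchy--Schwarz inequality then closes the argument.

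The main obstacle, as I see it, is interpretive rather than analytic. In part (1), the phrase ``all terms nonnegative in the sense of the inner product of $H^m$'' needs to be pinned down: since $c_{m+1}-c_m$ is itself a vector, the cleanest reading is to locate the positivity at the level of the operators $(G_{m+1}+\lambda I)^{-1}$ and $\Delta G^{(m)}$ applied to $c_m$, or equivalently through the positivity of the quadratic form $\langle c_m,\Delta G^{(m)}c_m\rangle$. In part (2), the delicate point is the identification $\langle f_n,g\rangle_{K_0}=\langle u_n,u_g\rangle_{\mathcal{H}}$; one must invoke the minimality of $\mathcal{H}$ to ensure that $u\mapsto V(\cdot)^{*}u$ is an isometric isomorphism onto $\mathcal{H}_{K_0}$ and that it preserves the RKHS inner product. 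Once these conventions are fixed, the convergence follows from the block reduction of \prettyref{lem:3-1} and the exhaustion statement of \prettyref{lem:3-2}, and no heavier machinery is required.
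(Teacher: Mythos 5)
Your proposal is correct and follows essentially the same strategy as the paper, but is executed more carefully. In part (1), the paper uses the resolvent identity $A^{-1}-B^{-1}=B^{-1}(B-A)A^{-1}$ with $A=G_m+\lambda I$, $B=G_{m+1}+\lambda I$; as printed, the paper's intermediate line drops a minus sign (the correct resolvent formula gives $(G_m+\lambda I)^{-1}-(G_{m+1}+\lambda I)^{-1}=-(G_{m+1}+\lambda I)^{-1}\Delta G^{(m)}(G_m+\lambda I)^{-1}$), so that its displayed relation $c_m-c_{m+1}=(G_{m+1}+\lambda I)^{-1}\Delta G^{(m)}c_m$ should in fact read $c_{m+1}-c_m=(G_{m+1}+\lambda I)^{-1}\Delta G^{(m)}c_m$. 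Your direct subtraction of the two normal equations, giving $(G_{n+1}+\lambda I)(c_{n+1}-c_n)=\Delta G^{(n)}c_n$ outright, is cleaner and sign-safe, and it telescopes to the stated identity without detour. Your reading of the ``nonnegative in the sense of the inner product'' clause—as recording the positivity of the operators $(G_{m+1}+\lambda I)^{-1}$, $\Delta G^{(m)}$, and the quadratic form $\langle c_m,\Delta G^{(m)}c_m\rangle\ge 0$—is as precise as the paper's own terse assertion. For part (2), the paper only sketches the argument (``follows from orthogonality of feature vectors''), whereas you spell out the full chain: identify the representative $u_n=R_nV_Xc_n$ via the minimal-factorization isometry $\mathcal{H}\cong\mathcal{H}_{K_0}$, move $R_n$ across the inner product by selfadjointness, bound $\|c_n\|\le\|y\|/\lambda$ uniformly, deduce $\|R_n^{1/2}u_g\|\to 0$ from \eqref{eq:c-1}, upgrade to $\|R_nu_g\|\to 0$ via $\|R_n^{1/2}\|\le 1$, and close with Cauchy--Schwarz. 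This fills a genuine gap in the paper's proof and is exactly the right way to make the convergence rigorous; no heavier machinery is needed.
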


\begin{proof}
(1) We use the resolvent identity 
\[
A^{-1}-B^{-1}=B^{-1}\left(B-A\right)A^{-1}.
\]
Set $A=G_{m}+\lambda I$ and $B=G_{m+1}+\lambda I$. Since 
\[
G_{m}-G_{m+1}=\Delta G^{(m)},
\]
we have $B-A=-\Delta G^{(m)}$. Thus 
\[
\left(G_{m}+\lambda I\right)^{-1}-\left(G_{m+1}+\lambda I\right)^{-1}=\left(G_{m+1}+\lambda I\right)^{-1}\Delta G^{(m)}\left(G_{m}+\lambda I\right)^{-1}.
\]
Apply both sides to $y$, noting that $c_{m}=\left(G_{m}+\lambda I\right)^{-1}y$,
to obtain 
\[
c_{m}-c_{m+1}=\left(G_{m+1}+\lambda I\right)^{-1}\Delta G^{(m)}c_{m}.
\]
Rearranging to $c_{m+1}-c_{m}=\dots$ and summing from $m=0$ to $N-1$
yields the result.

(2) If the shorting exhausts $U$, then $V\left(s_{i}\right)c_{m,i}$
retains no $U$-component in the limit. For any $g$ with feature
vector in $U$, $\left\langle f_{n},g\right\rangle \to0$ follows
from orthogonality of feature vectors and the representation of $f_{n}$
in the feature space via $R_{n}$. 
\end{proof}

\subsection{Interpretation}

The dynamic kernel path $K_{0}\succeq K_{1}\succeq\cdots\succeq K_{\infty}$
provides a new form of kernel regularization guided by operator shorting
dynamics rather than scalar tuning. The path is monotone (each step
removes positive kernel content), structured (the removed content
is exactly $K^{(m)}$), and interpretable (each $K^{(m)}$ represents
the geometry filtered out at step $m$). Crucially, if the dynamic
targets a subspace $U$, the $U$-component of the predictor decays
to zero, while the solution evolves predictably via residual Gram
components.

\section{Task-Driven Shorting and Learning Dynamics}\label{sec:5}

Up to this point the effects $T_{n}$ have been arbitrary, subject
only to $0\le T_{n}\le I$, and the analysis has been purely structural.
We now turn to the learning setting, where the choice of $T_{n}$
is driven by a task. The central idea is to use shorting as a mechanism
for dynamically eliminating feature directions in $\mathcal{H}$ that
are deemed nuisance or task-irrelevant, while retaining those necessary
for prediction.

Throughout this section we fix a finite sample $X=\{s_{1},\dots,s_{m}\}\subset S$
and labels $y=(y_{1},\dots,y_{m})\in H^{m}$, and we work with the
Gram operators $G_{n}$ and residuals $\Delta G^{(m)}$ defined in
\prettyref{sec:4}. Results in this section are expressed at the level
of $G_{n}$, the KRR solutions $c_{n},f_{n}$, and task-driven choices
of $T_{n}$.

\subsection{Task-Driven Design of the Effects}

We first describe a few natural design principles for the effects
$T_{n}$. In each case, the shorting update 
\[
R_{n+1}=R^{1/2}_{n}(I-T_{n+1})R^{1/2}_{n}
\]
remains the core engine, with the task entering only through the choice
of $T_{n+1}$.

\subsubsection*{Nuisance-space shorting}

Suppose prior knowledge identifies a closed subspace $U\subset\mathcal{H}$
that carries nuisance variation, such as a known family of confounders
(correlated nuisance variables), style directions (symmetry generators),
or uninformative covariates (high-variance noise). The simplest choice
is 
\[
T_{n}=P_{U},\qquad n\ge1.
\]
As shown in \prettyref{sec:2}, this forces $R_{n}\to P_{U^{\perp}}$
strongly, so that the limiting kernel is 
\[
K_{\infty}(s,t)=V(s)^{*}P_{U^{\perp}}V(t),
\]
the shorted kernel that annihilates $U$ and is maximal under the
constraint $K_{\infty}\preceq K_{0}$. \prettyref{sec:4} already
shows that, at the KRR level, the component of the predictor with
feature vector in $U$ is removed dynamically. We revisit this in
a task-focused form below.

\subsubsection*{Data-driven covariance shorting}

Let 
\[
\Sigma_{X}:=\frac{1}{m}\sum^{m}_{i=1}V(s_{i})V(s_{i})^{*}
\]
be the empirical feature covariance operator on $\mathcal{H}$ associated
with the sample $X$. One may choose $T_{n+1}$ as a spectral projector
of $\Sigma_{X}$, 
\[
T_{n+1}=\chi_{[\tau_{n+1},\infty)}(\Sigma_{X}),
\]
where the threshold $\tau_{n+1}\ge0$ is tuned on a validation set.
Shorting along $T_{n+1}$ then removes high-variance feature directions
(e.g., dominant nuisance modes) in a nonlinear, operator-theoretic
way that cannot be reproduced by scalar shrinkage on $G_{n}$ alone.

\subsubsection*{Greedy residual shorting}

More adaptively, one can choose each $T_{n+1}$ to target the direction
in $\mathcal{H}$ that makes the largest contribution to a given quadratic
task functional. A basic instance, analyzed in detail below, is obtained
by fixing a positive operator $B\in B(\mathcal{H})$ (encoding the
task) and choosing $T_{n+1}$ as the rank-one projection onto the
leading eigenvector of 
\[
B_{n}:=R^{1/2}_{n}BR^{1/2}_{n}.
\]
In this case, each shorting step removes the maximal possible amount
of task energy $\langle B,R_{n}\rangle$ among all rank-one shortings
of $R_{n}$. This produces a genuinely new greedy feature-elimination
dynamic.

\subsection{Shorting-Based Energy Decomposition}

We now make the preceding discussion precise. Fix a positive operator
$B\in B(\mathcal{H})$, which we interpret as a task operator. In
applications, $B$ may be chosen as an empirical covariance, or as
an operator encoding a linear functional of interest. For each $n$,
define the \emph{task energy} of the current residual operator $R_{n}$
by 
\[
E_{n}:=\left\langle B,R_{n}\right\rangle :=\mathrm{tr}(R^{1/2}_{n}BR^{1/2}_{n}),
\]
whenever $BR_{0}$ is trace class so that $E_{0}<\infty$. The shorting
update induces a simple identity for the energy drop.
\begin{lem}
\label{lem:5-1}For each $n\ge0$, 
\[
E_{n}-E_{n+1}=\langle R^{1/2}_{n}BR^{1/2}_{n},T_{n+1}\rangle,
\]
where the inner product on the right-hand side is the Hilbert-Schmidt
inner product whenever $R^{1/2}_{n}BR^{1/2}_{n}$ is trace class.
In particular, 
\[
E_{n}-E_{n+1}\ge0,
\]
and the per-step energy drop is governed by $T_{n+1}$ and the conjugated
task operator $B_{n}=R^{1/2}_{n}BR^{1/2}_{n}$. 
\end{lem}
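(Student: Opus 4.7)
The plan is to reduce the claimed identity to the residual identity~\eqref{eq:b-1} and then apply cyclicity of the trace; the sign statement will follow from the standard fact that the trace of a product of two positive operators is nonnegative.

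First, I would expand the energy difference using cyclicity of the trace:
\[
E_n - E_{n+1} = \mathrm{tr}(BR_n) - \mathrm{tr}(BR_{n+1}) = \mathrm{tr}\bigl(B(R_n - R_{n+1})\bigr).
\]
Substituting the residual identity $R_n - R_{n+1} = R_n^{1/2} T_{n+1} R_n^{1/2}$ from \eqref{eq:b-1} and cycling once more yields
\[
\mathrm{tr}\bigl(B R_n^{1/2} T_{n+1} R_n^{1/2}\bigr) = \mathrm{tr}\bigl(R_n^{1/2} B R_n^{1/2}\, T_{n+1}\bigr) = \langle B_n, T_{n+1}\rangle,
\]
which is the identity to be proved, interpreted as the Hilbert-Schmidt pairing between the trace-class $B_n$ and the bounded self-adjoint $T_{n+1}$. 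For nonnegativity I would cycle a further time to write this as $\mathrm{tr}(B_n^{1/2} T_{n+1} B_n^{1/2})$; since $B_n \geq 0$ and $T_{n+1} \geq 0$, the operator $B_n^{1/2} T_{n+1} B_n^{1/2}$ is positive, and its trace is nonnegative.

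The one place where care is required, and therefore the main (if relatively minor) obstacle, is the trace-class bookkeeping needed to make every step above rigorous. The hypothesis that $BR_0$ is trace class gives $E_0 = \mathrm{tr}(B^{1/2} R_0 B^{1/2}) < \infty$. Since the earlier positivity and monotonicity lemma gives $R_n \leq R_0$, we have $B^{1/2} R_n B^{1/2} \leq B^{1/2} R_0 B^{1/2}$, so
\[
\mathrm{tr}(B_n) = \mathrm{tr}(BR_n) = \mathrm{tr}(B^{1/2} R_n B^{1/2}) \leq E_0 < \infty.
\]
Thus each $B_n$ is a positive trace-class operator. This justifies all the cyclic trace manipulations above, ensures the Hilbert-Schmidt pairing $\langle B_n, T_{n+1}\rangle$ is well defined, and guarantees that the energy sequence itself remains finite, completing the reduction to a one-line computation.
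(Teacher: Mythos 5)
Your proof is correct and follows essentially the same route as the paper's: expand $E_n - E_{n+1}$ via the residual identity $R_n - R_{n+1} = R_n^{1/2} T_{n+1} R_n^{1/2}$, cycle the trace to obtain $\langle B_n, T_{n+1}\rangle$, and read off nonnegativity from positivity of the operators involved. Your trace-class bookkeeping at the end (showing $\mathrm{tr}(B_n) \le E_0 < \infty$ via $R_n \le R_0$) is a touch more explicit than what the paper records, and is a nice addition, but it is not a different argument.
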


\begin{proof}
Using the shorting update, 
\[
R_{n+1}=R^{1/2}_{n}(I-T_{n+1})R^{1/2}_{n}=R_{n}-R^{1/2}_{n}T_{n+1}R^{1/2}_{n},
\]
we obtain 
\begin{align*}
E_{n}-E_{n+1} & =\left\langle B,R_{n}-R_{n+1}\right\rangle =\langle B,R^{1/2}_{n}T_{n+1}R^{1/2}_{n}\rangle.
\end{align*}
Positivity of $R^{1/2}_{n}T_{n+1}R^{1/2}_{n}$ and $B$ implies the
right-hand side is nonnegative. Whenever $BR_{0}$ is trace class,
the Hilbert-Schmidt inner product is well defined and 
\[
\langle B,R^{1/2}_{n}T_{n+1}R^{1/2}_{n}\rangle=\mathrm{tr}\big(R^{1/2}_{n}BR^{1/2}_{n}T_{n+1}\big)=\langle R^{1/2}_{n}BR^{1/2}_{n},T_{n+1}\rangle,
\]
which yields the stated identity. 
\end{proof}
Thus shorting can be seen as a mechanism that decreases $E_{n}$ by
an amount determined by the overlap of $T_{n+1}$ with $B_{n}=R^{1/2}_{n}BR^{1/2}_{n}$.

\subsection{Greedy Rank-One Shorting}

We now show that in the finite-rank, rank-one case shorting induces
a natural greedy strategy: at each step, choose the effect that maximizes
task energy removal.

Assume for this subsection that $R_{n}$ has finite rank for all $n$
and that we restrict $T_{n+1}$ to be a rank-one projection supported
in $\overline{ran}(R_{n})$. Thus 
\[
T_{n+1}=\left|w_{n}\left\rangle \right\langle w_{n}\right|
\]
for some unit vector $w_{n}\in\overline{ran}(R_{n})$. Then \prettyref{lem:5-1}
becomes 
\[
E_{n}-E_{n+1}=\langle R^{1/2}_{n}BR^{1/2}_{n},\left|w_{n}\left\rangle \right\langle w_{n}\right|\rangle=\langle w_{n},R^{1/2}_{n}BR^{1/2}_{n}w_{n}\rangle.
\]

\begin{thm}
\label{thm:greedy}Suppose $R_{n}$ has finite rank and we restrict
to rank-one effects $T_{n+1}=\left|w\left\rangle \right\langle w\right|$
with $\left\Vert w\right\Vert =1$ and $w\in\overline{ran}(R_{n})$.
Then the shorting step that maximizes the energy drop $E_{n}-E_{n+1}$
is obtained by choosing $w_{n}$ to be an eigenvector corresponding
to the largest eigenvalue of $B_{n}:=R^{1/2}_{n}BR^{1/2}_{n}$. The
maximum energy drop is given by 
\[
E_{n}-E_{n+1}=\lambda_{\max}(B_{n}),
\]
and no other rank-one effect achieves a larger drop. 
\end{thm}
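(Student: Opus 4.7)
The plan is to reduce the maximization problem to a standard Rayleigh quotient computation by invoking the identity from Lemma \ref{lem:5-1}. First I would record that for a rank-one effect $T_{n+1}=|w\rangle\langle w|$ with $\|w\|=1$, the lemma gives
\[
E_{n}-E_{n+1}=\langle w,B_{n}w\rangle,\qquad B_{n}=R^{1/2}_{n}BR^{1/2}_{n}.
\]
Since $B\ge 0$, the conjugated operator $B_{n}$ is positive, and in the finite-rank setting it is self-adjoint with a well-defined top eigenvalue $\lambda_{\max}(B_{n})$. The task therefore reduces to maximizing the quadratic form $\langle w,B_{n}w\rangle$ over unit vectors $w$, subject to the feasibility constraint $w\in\overline{\operatorname{ran}}(R_{n})$.

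Next I would apply the Rayleigh--Ritz variational principle to $B_{n}$: over all unit vectors in $\mathcal H$, the form $\langle w,B_{n}w\rangle$ attains its maximum $\lambda_{\max}(B_{n})$ precisely on the top eigenspace of $B_{n}$. To handle the constraint, I would observe that $B_{n}=R^{1/2}_{n}BR^{1/2}_{n}$ factors through $R^{1/2}_{n}$, so
\[
\operatorname{ran}(B_{n})\subset\overline{\operatorname{ran}}(R^{1/2}_{n})=\overline{\operatorname{ran}}(R_{n}),
\]
and any eigenvector of $B_{n}$ corresponding to a nonzero eigenvalue lies in $\operatorname{ran}(B_{n})$, hence automatically satisfies the feasibility constraint. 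Thus a top eigenvector $w_{n}$ of $B_{n}$ is both admissible and optimal, and substituting it back gives the stated identity $E_{n}-E_{n+1}=\lambda_{\max}(B_{n})$.

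Finally I would dispose of the degenerate case $\lambda_{\max}(B_{n})=0$: here $B_{n}=0$, so every admissible rank-one effect yields energy drop zero, and the statement is vacuous. Uniqueness of the maximum value (but not of the maximizer, in case of multiplicity) follows immediately from the variational characterization.

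The main obstacle I anticipate is not the Rayleigh quotient itself, which is routine, but verifying that the optimal unit vector can be taken inside the prescribed subspace $\overline{\operatorname{ran}}(R_{n})$ rather than in a larger ambient space. The key point is the factorization $B_{n}=R^{1/2}_{n}BR^{1/2}_{n}$, which ensures that all of the spectral mass of $B_{n}$ already lives in $\overline{\operatorname{ran}}(R_{n})$; once this is noted, the constrained and unconstrained maxima coincide, and the rest of the argument is a direct application of the variational characterization of $\lambda_{\max}$.
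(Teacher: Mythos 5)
Your proof is correct and follows essentially the same route as the paper: invoke Lemma \ref{lem:5-1} to write the energy drop as the Rayleigh quotient $\langle w, B_n w\rangle$, apply the variational characterization of $\lambda_{\max}$, and observe that the factorization $B_n = R_n^{1/2} B R_n^{1/2}$ forces any eigenvector for a nonzero eigenvalue to lie in $\overline{\operatorname{ran}}(R_n)$, so the constraint is not active. The paper phrases the feasibility step via $B_n$ vanishing on $\ker(R_n)$ rather than via $\operatorname{ran}(B_n)\subset\overline{\operatorname{ran}}(R_n)$, but these are the same observation; your explicit treatment of the degenerate case $\lambda_{\max}(B_n)=0$ is a small addition the paper leaves implicit.
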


\begin{proof}
Under the stated assumptions, 
\[
E_{n}-E_{n+1}=\left\langle w,B_{n}w\right\rangle ,
\]
where $B_{n}$ is positive and finite rank. The Rayleigh quotient
variational principle for positive operators asserts that 
\[
\lambda_{\max}(B_{n})=\max_{\left\Vert w\right\Vert =1}\left\langle w,B_{n}w\right\rangle ,
\]
with the maximum attained precisely on the eigenspace associated with
$\lambda_{\max}(B_{n})$. Note that since $B_{n}$ vanishes on $\ker\left(R_{n}\right)$,
any eigenvector corresponding to a non-zero eigenvalue automatically
lies in $\overline{\mathrm{ran}}\left(R_{n}\right)$. Thus any unit
eigenvector $w_{n}$ corresponding to $\lambda_{\max}(B_{n})$ yields
\[
E_{n}-E_{n+1}=\left\langle w_{n},B_{n}w_{n}\right\rangle =\lambda_{\max}(B_{n}),
\]
and no other unit vector $w$ produces a larger value. 
\end{proof}
Thus, within the class of rank-one shortings, the choice of $T_{n+1}$
that removes the largest amount of task energy $E_{n}$ is precisely
the shorting along the leading eigenvector of the conjugated task
operator $R^{1/2}_{n}BR^{1/2}_{n}$. This greedy principle is genuinely
nonlinear in $R_{n}$ and is a form of metric-driven feature elimination
that has no direct equivalent in standard scalar kernel regularization.

\subsection{Specialization to Kernel Ridge Regression}

We now specialize these ideas to kernel ridge regression. Recall that
for each $n$, the KRR solution with kernel $K_{n}$ and regularization
parameter $\lambda>0$ is 
\[
f_{n}(t)=\sum^{m}_{i=1}K_{n}(t,s_{i})c_{n,i},\qquad(G_{n}+\lambda I)c_{n}=y.
\]
\prettyref{thm:4-3} shows that the KRR path $\{f_{n}\}$ admits an
explicit decomposition in terms of the residual kernels $K^{(m)}$
and Gram increments $\Delta G^{(m)}$. We now connect this to a task-driven
choice of $B$.

A natural choice is to fix $B$ as the empirical feature covariance
associated with the training inputs: 
\[
B_{X}:=\frac{1}{m}\sum^{m}_{i=1}V(s_{i})V(s_{i})^{*}.
\]
Choosing $T_{n+1}$ according to \prettyref{thm:greedy} with this
$B_{X}$ produces a feature-suppression dynamic that greedily removes
the directions in $\mathcal{H}$ that account for the largest portion
of the empirical feature variance.

Combined with the results in \prettyref{thm:4-3}, this implies that
the KRR predictor adapts by shedding support on the feature directions
selected by $T_{m+1}$. In particular, the contribution to $f_{0}$
from the top eigen-directions of $B_{0},B_{1},\dots,B_{N-1}$ is removed
after $N$ steps.

\subsection{Nuisance-Invariant Limits}

We conclude by returning to the nuisance-space shorting scheme, which
specializes the abstract dynamic invariance of \prettyref{sec:2}
to the present learning setting. For clarity, we state the result
as a corollary of \prettyref{thm:4-3}.
\begin{cor}
\label{cor:5-3}Let $U\subset\mathcal{H}$ be a closed subspace and
set $T_{n}=P_{U}$ for all $n\ge1$. Let $f_{n}$ be the KRR solutions
with kernels $K_{n}$. Then:
\begin{enumerate}
\item The limiting kernel is $K_{\infty}(s,t)=V(s)^{*}P_{U^{\perp}}V(t)$,
and the limiting KRR predictor $f_{\infty}$ depends only on the component
of the labels that is visible through $U^{\perp}$. 
\item For any $g\in\mathcal{H}_{K_{0}}$ whose feature vector lies in $U$,
\[
\left\langle f_{n},g\right\rangle _{K_{0}}\longrightarrow0.
\]
In particular, the nuisance component of $f_{n}$ with feature in
$U$ is removed dynamically. 
\item Among all kernels $K\preceq K_{0}$ whose feature map annihilates
$U$, the kernel $K_{\infty}$ is maximal, and therefore $f_{\infty}$
is the least regularized KRR solution subject to full nuisance removal. 
\end{enumerate}
\end{cor}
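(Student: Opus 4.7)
The plan is to extract each of the three claims from results already established, after observing that the choice $T_n = P_U$ makes the dynamic collapse after a single step. First I would compute directly via Lemma \ref{lem:3-1}: with $T_n = P_U$ for all $n \ge 1$, the $U$-block of $I - T_n$ is identically zero, so $R_1 = P_{U^{\perp}}$ and the recursion is stationary from $n=1$ onward. This verifies the exhaustion hypothesis \eqref{eq:c-1} trivially and, via Lemma \ref{lem:3-2} (equivalently Theorem \ref{thm:c-4}), yields $K_{\infty}(s,t) = V(s)^{*} P_{U^{\perp}} V(t)$. For the predictor statement in (1), I would write $K_{\infty}(s,t) = \widetilde{V}(s)^{*} \widetilde{V}(t)$ with $\widetilde{V}(s) := P_{U^{\perp}} V(s)$, so that the limiting Gram operator is $G_{\infty} = V_{X}^{*} P_{U^{\perp}} V_{X}$ and the normal equations $(G_{\infty} + \lambda I) c_{\infty} = y$ couple labels to features only through the projected map $\widetilde{V}$; equivalently $f_{\infty}(t) = V(t)^{*} P_{U^{\perp}} u_{\infty}$, so the predictor responds only to label content that is reachable by $U^{\perp}$-projected features.

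Part (2) is a direct specialization of Theorem \ref{thm:4-3}(2): the hypotheses ($T_n$ supported in $U$ and exhaustion of $U$) are satisfied, so $\langle f_n, g\rangle_{K_0} \to 0$ whenever the feature vector of $g$ lies in $U$. In fact in this aggressive case the convergence is immediate: for $n \ge 1$, the feature representative of $f_n$ lies in $\operatorname{ran}(R_n) = U^{\perp}$ exactly, so the inner product vanishes for all $n \ge 1$ rather than merely in the limit. Part (3) combines Theorem \ref{thm:c-4} with the RKHS-norm/kernel duality used in the proof of Proposition \ref{prop:4-2}: the first gives that $K_{\infty}$ is the unique Löwner-maximal element among positive kernels $\widetilde{K} \preceq K_0$ whose representing contraction annihilates $U$, while the identity $\|f\|^{2}_{K_n} = \langle u, R_{n}^{-1} u\rangle$ shows that a larger kernel induces a smaller RKHS norm on any fixed representable function. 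Consequently $K_{\infty}$ yields the smallest regularization penalty $\lambda\|f\|_{K}^{2}$ compatible with full nuisance removal, identifying $f_{\infty}$ as the least regularized KRR solution in this admissible class.

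The main obstacle I anticipate is purely interpretive rather than technical: pinning down the informal phrases "depends only on the component of the labels visible through $U^{\perp}$" and "least regularized" with enough precision that the two appeals (to Theorems \ref{thm:c-4} and \ref{thm:4-3} and to Proposition \ref{prop:4-2}) actually discharge the stated claims. The reading above—labels enter only through their image under $\widetilde{V}$, and "least regularized" means smallest penalty among admissible dominated kernels—is natural and makes each part essentially a one-line consequence of earlier results; no new estimates or convergence arguments are needed, since the collapse $R_n = P_{U^{\perp}}$ for $n \ge 1$ trivializes the dynamical content of the hypotheses.
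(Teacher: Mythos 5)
Your proposal is correct and follows the same skeleton as the paper's proof: reduce to \prettyref{thm:4-3} and \prettyref{thm:c-4} once the operator dynamic is identified. The one genuine improvement you make over the paper is the explicit observation that, with $T_{n}=P_{U}$, the dynamic collapses after a single step: $R_{1}=I\cdot(I-P_{U})\cdot I=P_{U^{\perp}}$, and $P_{U^{\perp}}^{1/2}(I-P_{U})P_{U^{\perp}}^{1/2}=P_{U^{\perp}}$, so $R_{n}=P_{U^{\perp}}$ for all $n\ge1$. The paper invokes only the softer fact that $R_{n}\to P_{U^{\perp}}$ strongly; your stationarity observation both verifies the exhaustion hypothesis \eqref{eq:c-1} at a glance and yields the sharper form of (2) you note — since the feature representative $u_{n}=R_{n}V_{X}c_{n}$ lies in $\operatorname{ran}(R_{n})=U^{\perp}$ for $n\ge1$, the pairing $\langle f_{n},g\rangle_{K_{0}}$ is exactly zero for every $n\ge1$, not merely in the limit. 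For (3), the paper re-derives the block argument (any representing contraction $\widetilde{R}$ with $\widetilde{R}|_{U}=0$ satisfies $\widetilde{R}\le P_{U^{\perp}}$) inside the corollary; your direct appeal to \prettyref{thm:c-4} together with the norm identity $\|f\|_{K}^{2}=\langle u,R^{-1}u\rangle$ from \prettyref{prop:4-2} is a cleaner way of saying the same thing, and it makes the translation from ``maximal kernel'' to ``least regularized KRR solution'' explicit rather than implicit. No gaps; the two proofs are interchangeable, with yours slightly more economical on parts (1)--(2).
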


\begin{proof}
(1) and (2) follow directly from \prettyref{thm:4-3} part (3), and
the fact that $R_{n}\to P_{U^{\perp}}$ strongly when $T_{n}=P_{U}$. 

For (3), let $\widetilde{K}\preceq K_{0}$ be any kernel whose canonical
feature map $\widetilde{V}$ satisfies $\widetilde{V}(s)h\in U^{\perp}$
for all $(s,h)$. Then the associated operator $\widetilde{R}$ on
$\mathcal{H}$ satisfies $0\le\widetilde{R}\le I$ and $\widetilde{R}u=0$
for all $u\in U$. Since $P_{U^{\perp}}$ is the orthogonal projection
onto $U^{\perp}$, we have $\widetilde{R}\le P_{U^{\perp}}$, hence
$\widetilde{K}(s,t)\preceq K_{\infty}(s,t)$ for all $s,t$. This
shows maximality of $K_{\infty}$ and, consequently, that the KRR
solution with $K_{\infty}$ is the least regularized solution among
all kernels achieving full nuisance removal. 
\end{proof}
This corollary shows a form of task-adapted invariance: once a nuisance
subspace $U$ is identified at the feature level, the shorting dynamic
provides a canonical path of kernels and KRR predictors that converges
to the maximal nuisance-free kernel $K_{\infty}$ and the corresponding
predictor $f_{\infty}$. This behavior cannot be replicated by scalar
kernel tuning alone; it crucially exploits the operator shorting dynamics
on $\mathcal{H}$.

\subsection{Summary}

The results in this section show that shorting can be used as a genuinely
new mechanism for dynamic kernel regularization driven by the feature
space geometry. The shorting map acts as the engine that 
\begin{itemize}
\item generates a decreasing sequence of kernels $K_{n}$ with positive
residual kernels $K^{(m)}$, 
\item induces a positive decomposition of the Gram operators $G_{n}$ and
a structured additive decomposition of the KRR predictors $f_{n}$, 
\item supports greedy, task-aligned feature elimination via \prettyref{thm:greedy},
and 
\item yields nuisance-invariant predictors via \prettyref{cor:5-3}. 
\end{itemize}

\section{Illustration of the Shorting Dynamics}\label{sec:6}

The results in Sections \ref{sec:2}--\ref{sec:3} describe how the
shorting iteration 
\[
R_{n+1}=R^{1/2}_{n}\left(I-T\right)R^{1/2}_{n}
\]
removes the contribution of a closed subspace $U\subset H$ while
retaining the maximal possible structure on $U^{\perp}$ under one-sided
domination. To provide a direct geometric interpretation of this effect,
we include a finite-dimensional example in which the entire dynamic
is visible in the plane. The illustration makes the behavior of the
kernels $\left\{ K_{n}\right\} $ and their associated estimators
visually transparent.

We take $H=\mathbb{R}^{2}$ equipped with its usual inner product
and coordinates $\left(s,u\right)$. The coordinate $s$ is regarded
as the signal direction, and the coordinate $u$ plays the role of
a nuisance direction. We consider the nuisance subspace 
\[
U=\left\{ \left(0,u\right):u\in\mathbb{R}\right\} ,\qquad U^{\perp}=\left\{ \left(s,0\right):s\in\mathbb{R}\right\} ,
\]
so that the orthogonal projection onto $U^{\perp}$ is the matrix
$\left[\begin{smallmatrix}1 & 0\\
0 & 0
\end{smallmatrix}\right]$. To mimic the shorting iteration, we introduce a monotone family
of positive operators 
\[
R_{n}=\left(\begin{matrix}1 & 0\\
0 & \left(1-c\right)^{n}
\end{matrix}\right),\qquad0<c<1,
\]
which leaves the signal direction unchanged and contracts the nuisance
direction by a factor $\left(1-c\right)^{n}$. In particular, $R_{n+1}\le R_{n}$
and $R_{n}\to P_{U^{\perp}}$ in the strong topology.

Each operator $R_{n}$ induces a kernel 
\[
K_{n}\left(\left(s,u\right),\left(s',u'\right)\right)=\langle R^{1/2}_{n}\left(s,u\right),R^{1/2}_{n}\left(s',u'\right)\rangle=ss'+\left(1-c\right)^{n}uu'.
\]
Thus $K_{0}$ treats the two coordinates symmetrically, whereas $K_{n}$
increasingly suppresses the nuisance coordinate, and 
\[
K_{\infty}\left(s,u;s',u'\right)=\lim_{n\to\infty}K_{n}\left(s,u;s',u'\right)=ss'
\]
is precisely the kernel corresponding to the shorted operator $P_{U^{\perp}}$.

To visualize the effect of $\left\{ K_{n}\right\} $, we generate
a finite set of points $\left(s_{i},u_{i}\right)$ in the square $\left[-1,1\right]^{2}\subset\mathbb{R}^{2}$
and assign labels by a tilted rule 
\[
y_{i}=\mathrm{sign}\left(s_{i}+\alpha u_{i}\right),\qquad\alpha>0.
\]
The separating boundary associated with these labels is not vertical;
it depends on both coordinates. For each $n$, we compute the kernel
ridge estimator $f_{n}$ associated with $K_{n}$. Since all kernels
in this example are linear in $\left(s,u\right)$, each estimator
is an affine function 
\[
f_{n}\left(s,u\right)=a_{n}s+b_{n}u+d_{n},
\]
and its decision boundary is a straight line in the plane.

\prettyref{fig:sd} shows the boundaries $f_{n}\left(s,u\right)=0$
for three intermediate values $n\in\left\{ 0,2,6\right\} $ and for
the shorted limit $n=\infty$. The sequence illustrates the following
behavior, which mirrors the abstract operator dynamics: 
\begin{itemize}
\item At $n=0$, the kernel $K_{0}\left(s,u;s',u'\right)=ss'+uu'$ permits
full dependence on the nuisance coordinate. The estimator aligns with
the tilted separating rule, and the resulting boundary is oblique. 
\item At moderate values of $n$, the factor $\left(1-c\right)^{n}$ significantly
suppresses the contribution of the second coordinate. The boundary
rotates toward vertical, reflecting the decreasing sensitivity of
the estimator to the nuisance direction. 
\item As $n\to\infty$, the operators $R_{n}$ converge to $P_{U^{\perp}}$
and the kernels converge to $K_{\infty}\left(s,u;s',u'\right)=ss'$.
The limiting estimator depends only on the signal coordinate $s$,
and the boundary becomes vertical, coinciding with the invariant separating
rule $s=0$. 
\end{itemize}
\begin{figure}[ht]
\begin{tabular}{cc}
\includegraphics[width=0.45\columnwidth]{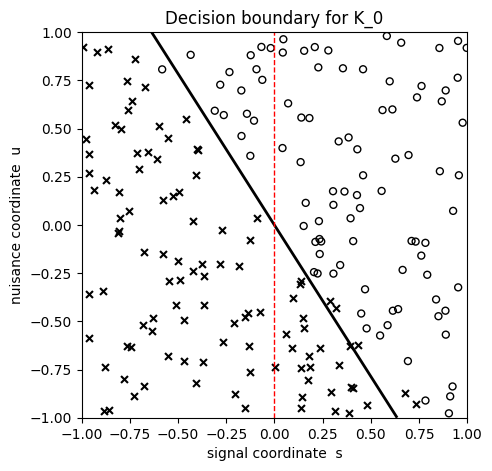} & \includegraphics[width=0.45\columnwidth]{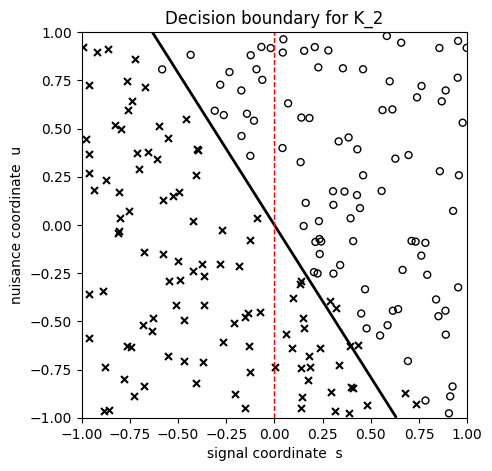}\tabularnewline
\includegraphics[width=0.45\columnwidth]{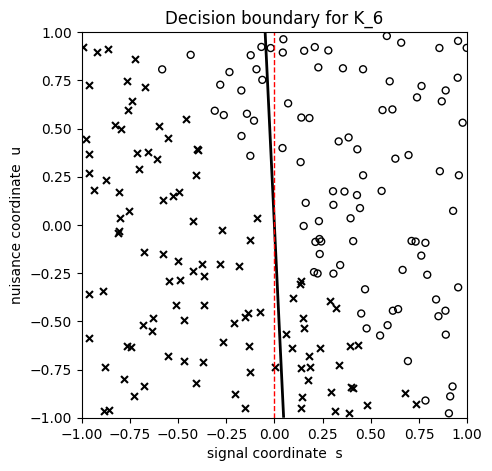} & \includegraphics[width=0.45\columnwidth]{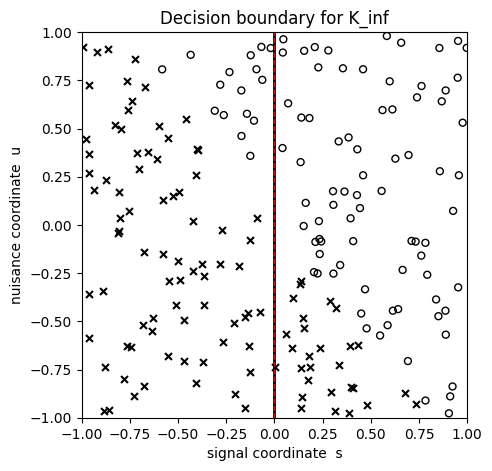}\tabularnewline
\end{tabular}

\caption{Evolution of the decision boundary under shorting dynamics. As $n$
increases, the kernel $K_{n}$ increasingly suppresses the nuisance
coordinate $u$, causing the decision boundary (solid lines) to rotate
from the tilted data-generating direction (at $n=0$) to the vertical
invariant limit (at $n=\infty$).}\label{fig:sd}
\end{figure}

\section{Concluding Remarks}

Shorting a fixed subspace is, of course, not a new problem. If one
knows a nuisance subspace $U\subset H$ and wishes to remove it entirely,
then projecting the feature map onto $U^{\perp}$ or using the classical
shorted operator already does the job. What the present work adds
is something different.

The residual-weighted update 
\[
R_{n+1}=R^{1/2}_{n}\left(I-T_{n+1}\right)R^{1/2}_{n}
\]
does not simply reproduce the static projection in a roundabout way.
It produces a path from the original geometry to the invariant one,
and the path has structure: the operators $\left(R_{n}\right)$, the
kernels $\left(K_{n}\right)$, and the Gram matrices $\left(G_{n}\right)$
all decrease monotonically, and each step comes with an explicit residual
identity. This allows one to see exactly what is being removed at
each stage and, in particular, to stop early when full invariance
is too harsh. In many learning problems the nuisance is real but not
completely irrelevant; having a gradual, controllable way to suppress
it can be useful.

A second point is that the dynamic can be aligned with a task. When
$T_{n+1}$ is chosen adaptively, the update removes the directions
that contribute most strongly to the task energy. In the rank-one
case this becomes a clean greedy principle: shorting along the top
eigenvector of $R^{1/2}_{n}BR^{1/2}_{n}$ achieves the largest possible
drop at that step. This type of feature elimination does not come
from the static theory, nor from the usual sample-level residualization
tricks.

Finally, the formulation keeps the operator, kernel, and sample pictures
in sync. The same recursion governs $\left(R_{n}\right)$, the induced
kernels, and the KRR predictors, and each level inherits the same
monotone and additive structure. This coherence is what allows the
method to be interpreted simultaneously in geometric, kernel-analytic,
and data-analytic terms.

The point, then, is not that shorting itself is new, or that one cannot
remove a known nuisance direction without the machinery developed
here. The point is that the dynamic exposes additional structure,
both geometric and algorithmic, that the static constructions do not
capture. Whether one uses the full limit or prefers to stop along
the way, the residual-weighted viewpoint offers a controlled and interpretable
way to modify a kernel while keeping careful track of what is being
lost and why.

\bibliographystyle{amsalpha}
\bibliography{ref}

\end{document}